\documentclass[11pt]{article}

\usepackage[T1]{fontenc} 
\usepackage[utf8]{inputenc} 
\usepackage{amsmath, amssymb, amsthm} \usepackage{empheq}
\usepackage{graphicx} 
\usepackage{xcolor} 
\usepackage{tcolorbox} \definecolor{myblue}{RGB}{56, 103, 214}
\definecolor{mygreen}{RGB}{7,153,146}
\usepackage[colorlinks=true,linkcolor=myblue,citecolor=mygreen,urlcolor=blue]{hyperref} 
\usepackage{bm} 
\usepackage{array, booktabs, multirow} 
\usepackage{float} 
\usepackage{mathrsfs} 
\usepackage{appendix} 
\usepackage[a4paper]{geometry} \usepackage{caption}

\usepackage[numbers, sort&compress]{natbib}

\numberwithin{equation}{section}

\tcbset{colback=blue!5!white, colframe=blue!60!black, boxrule=0.6pt, arc=2mm}

\theoremstyle{plain} \newtheorem{theorem}{Theorem}[section]
\newtheorem{lemma}[theorem]{Lemma}

\theoremstyle{remark} \newtheorem{remark}[theorem]{Remark}

\theoremstyle{definition} 
\newtheorem{example}{Example}[section]

\newcommand{\bn}{\bm{n}} \newcommand{\bx}{\bm{x}} \newcommand{\bH}{\bm{H}}
\newcommand{\ri}{\mathrm{i}} 
 \newcommand{\cH}{\mathcal{H}}
\newcommand{\cL}{\mathcal{L}} \newcommand{\bCurl}{\text{curl}}

\newcommand{\nn}{\nonumber}

\usepackage{mathtools}

\newcommand{\Ih}{\mathcal{I}_h} \newcommand{\Reop}{\operatorname{Re}}
\newcommand{\Imop}{\operatorname{Im}} \newcommand{\dd}{\,\mathrm{d}}
\newcommand{\bA}{\bm{A}} \newcommand{\bE}{\bm{E}} \newcommand{\bv}{\bm{v}}
\newcommand{\Grad}{\nabla} \newcommand{\Curl}{\nabla \times} \newcommand{\Lam}{\Lambda}
\newcommand{\bJ}{\bm{J}} \newcommand{\bu}{\bm{u}}
\newcommand{\sphi}{\mathcal{S}_{\phi}}

\title{A Gauge-Invariant Finite Element Method for the Time-Dependent
  Ginzburg--Landau Equations}

\author{ Huadong~Gao, \thanks{ School of Mathematics and Statistics, Huazhong
    University of Science and Technology, Wuhan 430074, P.R. China ({\tt
      huadong@hust.edu.cn}). The work of the author was supported in part by the
    National Key Research and Development Program of China (No.~2023YFC3804500)
    and the National Natural Science Foundation of China (No.~12231003).} \quad
  Yuhui~Huang \thanks{Corresponding author. School of Mathematics and Statistics, Huazhong University
    of Science and Technology, Wuhan 430074, P.R. China. ({\tt
      yuhui\_huang@hust.edu.cn}) } ~~ and ~~ Wen~Xie \thanks{ School of
    Mathematics and Statistics, Changsha University of Science and Technology,
    Changsha 410114, Hunan, P.R. China ({\tt wenxiemath@163.com}) } } \date{}

\begin{document}

\maketitle

\begin{abstract}
  It is well known that the Ginzburg--Landau equations are gauge invariant. The
  most popular gauge choice in practical simulations is the temporal gauge, also
  known as the zero electric potential gauge. However, the commonly used
  N\'ed\'elec approximations may produce spurious giant vortices on domains with
  re-entrant corners. To address this issue while retaining the N\'ed\'elec
  approximation, we propose a finite element scheme that preserves the gauge
  invariance of the time-dependent Ginzburg--Landau equations at the discrete
  level. Under the temporal gauge, the method employs lowest-order N\'ed\'elec
  edge elements for the magnetic potential and continuous piecewise-linear
  elements for the order parameter on triangular and tetrahedral meshes, and
  yields a linear, decoupled formulation that requires only the solution of
  linear systems at each time step. We prove that the scheme is gauge invariant,
  preserves a discrete maximum bound principle for the order parameter,
  $\|\psi_h^n\|_{L^\infty(\Omega)}\le 1$, and satisfies a discrete energy law under mild step-size and
  mesh conditions. Numerical experiments confirm these theoretical properties.
  In particular, on domains with re-entrant corners, the scheme computes the
  correct physical solution, whereas conventional temporal-gauge discretizations
  with N\'ed\'elec elements produce an unphysical solution. \vskip 0.2in
  
  \noindent\textbf{Keywords:} time-dependent Ginzburg--Landau equations;
  temporal gauge; gauge invariance; finite element method; discrete maximum
  bound principle; energy stability.
\end{abstract}

\section{Introduction}
\label{sec:introduction}

The time-dependent Ginzburg--Landau (TDGL) equations, first proposed by
Schmid~\cite{Schmid1966} and later derived from the microscopic BCS
theory~\cite{Bardeen1957} by Gor'kov and Eliashberg~\cite{Gorkov1968}, provide a
widely used mesoscopic model for the evolution of superconducting states,
governing the complex order parameter $\psi$, the magnetic potential $\bA$, and
the electric potential $\phi$. In their nondimensional form, the equations read
\begin{empheq}[left=\empheqlbrace]{align}
  &\eta\partial_t\psi+\ri\eta\kappa\phi\psi +\left(\dfrac{\ri}{\kappa}\nabla+\bA\right)^2\psi +\bigl(|\psi|^2-1\bigr)\psi
  =0,\label{eq:tdgl-no-fixed:psi}\\
  &\partial_t\bA+\Curl\!\bigl(\Curl\bA\bigr) +\Grad \phi +\Reop\!\left[\psi^*\left(\dfrac{\ri}{\kappa}\nabla+\bA\right)\psi\right] =\Curl H\,,\label{eq:tdgl-no-fixed:A}
\end{empheq}
subject to the boundary and initial conditions
\begin{align}
  \left(\dfrac{\ri}{\kappa}\nabla+\bA\right)\psi\cdot\bn=0,
  \qquad
  &(\Curl\bA) \times \bn=H \times \bn,
    \qquad
    \bigl(\partial_t\bA+\nabla\phi\bigr)\cdot\bn=0
    \quad\text{on }\partial\Omega,
    \label{eq:continuous-boundary-conditions}\\
  &\psi(\cdot,0)=\psi_0,
    \qquad
    \bA(\cdot,0)=\bA_0\,,
    \label{eq:continuous-initial-conditions}
\end{align}
where $\Omega \subset \mathbb{R}^d$ ($d=2,3$) is a bounded Lipschitz domain with outward
unit normal vector $\mathbf{n}$ on $\partial\Omega$. The positive parameters $\eta$ and $\kappa$
represent the normalized conductivity and the Ginzburg--Landau parameter,
respectively. The applied magnetic field $H$ is taken to be a real constant for
$d=2$, and a constant real vector field for $d=3$. In \eqref{eq:tdgl-no-fixed:psi}--\eqref{eq:tdgl-no-fixed:A}, $|\psi|^2$
represents the Cooper pair density; specifically, $|\psi|^2=1$ and $|\psi|^2=0$
signify the perfectly superconducting state and the normal state, respectively,
while $0<|\psi|^2<1$ corresponds to the mixed (vortex) state.

When $d=2$, for any vector-valued function $\bm{u}=[u_1,u_2]^T$ and scalar
function $f$, we define the two-dimensional curl operators as
\begin{equation}
  \Curl\bm{u}:=\frac{\partial u_2}{\partial x_1} - \frac{\partial u_1}{\partial x_2},
  \qquad
  \Curl f := \bigl[ \frac{\partial f}{\partial x_2}, - \frac{\partial f}{\partial x_1} \bigr]^T.
  \label{eq:continuous-curl-conventions}
\end{equation}

The TDGL system is gauge invariant \cite{Du1992a}: if $(\psi, \bA, \phi)$ is a solution, then for
any sufficiently smooth scalar function $\chi$ the transformed variables
\begin{equation}
  \widetilde{\psi} = \psi e^{i\kappa\chi}, \qquad \widetilde{\bA}
  = \bA + \nabla\chi, \qquad \widetilde{\phi} = \phi - \partial_t\chi
\end{equation}
also satisfy the same system. Under this gauge transformation, all physically
observable quantities remain invariant, including the Cooper pair density
$|\widetilde{\psi}|^2 = |\psi|^2$ and the magnetic induction
$\Curl\widetilde{\bA} = \Curl\bA$.

Without gauge fixing, the solution $(\psi, \bA, \phi)$ to the TDGL system is
non-unique. A widely used choice is the temporal gauge (zero electric
potential), i.e., $\phi \equiv 0$. Under this gauge, the system \eqref{eq:tdgl-no-fixed:psi}--\eqref{eq:continuous-initial-conditions} reduces to
\begin{empheq}[left=\empheqlbrace]{align}
  &\eta\partial_t\psi +\left(\dfrac{\ri}{\kappa}\nabla+\bA\right)^2\psi +\bigl(|\psi|^2-1\bigr)\psi = 0, \label{eq:tdgl-zero-phi:psi}\\
  &\partial_t\bA+\Curl\!\bigl(\Curl\bA\bigr) +\Reop\!\left[\psi^*\left(\dfrac{\ri}{\kappa}\nabla+\bA\right)\psi\right] =\Curl H, \label{eq:tdgl-zero-phi:A}
\end{empheq}
subject to the boundary and initial conditions
\begin{align}
  \left(\dfrac{\ri}{\kappa}\nabla+\bA\right)\psi\cdot\bn=0,
  \qquad
  &(\Curl\bA) \times \bn=H \times \bn,
    \qquad
    \bA\cdot\bn=0
    \quad\text{on }\partial\Omega,
    \label{eq:continuous-boundary-conditions-zero-phi}\\
  &\psi(\cdot,0)=\psi_0,
    \qquad
    \bA(\cdot,0)=\bA_0\,.
    \label{eq:continuous-initial-conditions-zero-phi}
\end{align}
The above system can be viewed as a gradient flow of the following
Ginzburg--Landau energy:
\begin{align}
  \mathcal{G}(\psi, \bA) :=
  \frac{1}{2} \int_{\Omega} \left| \left(\frac{\mathrm{i}}{\kappa}\nabla + \bA \right) \psi \right|^2 \mathrm{d}x
  + \frac{1}{4} \int_{\Omega} \left( |\psi|^2 - 1 \right)^2 \mathrm{d}x
  + \frac{1}{2} \|\Curl \bA - H \|_{L^2(\Omega)}^2 \,.
\end{align}

Reviews of the analysis and numerical methods
for the Ginzburg--Landau model can be found in \cite{Du1992a,Du1994a,Du2005a}.
Numerical methods for the TDGL equations have been developed
for the two most popular gauge choices: the Lorentz gauge and the temporal
gauge. Under the Lorentz gauge ($\phi=-\mathrm{div} \bA$), conventional Lagrange
finite element methods (FEMs) were studied in \cite{Chen1995a,Gao2014a}. The magnetic potential of the
TDGL system on a nonconvex polygon is not in $H^1(\Omega)$ in general, so a finite
element method that computes $\bA$ directly in nodal (Lagrange) spaces may
converge to an incorrect solution. To circumvent this difficulty, a mixed method
was proposed in \cite{Gao2015a} and Hodge-decomposition-based FEMs were introduced in \cite{Li2015a,Li2016}.
Under the temporal gauge ($\phi=0$), the TDGL equations \eqref{eq:tdgl-zero-phi:psi}--\eqref{eq:tdgl-zero-phi:A} take a simpler
form than those under the Lorentz gauge. Moreover, the energy decay property of
the model can be readily derived by taking inner products of the system with
$(\partial_t\psi,\partial_t\bA)$. Accordingly, numerical methods for the TDGL equations under the
temporal gauge have been extensively studied; see, e.g., \cite{Duan2022,Yang2014a,Wu2018a,Hong2023a,Ma2023a}. To address the
singularity of the magnetic potential $\bA$, N\'ed\'elec elements are used to
discretize equation \eqref{eq:tdgl-zero-phi:A} in \cite{Duan2022,Hong2023a,Ma2023a}. These studies also presented numerical
experiments on an L-shaped domain to demonstrate the convergence of their
methods. It is widely believed that using N\'ed\'elec elements for $\bA$ allows
the vortex pattern to be computed correctly.

However, in a recent study, Tain, Caputo, and Danaila~\cite{Tain2024a} conducted systematic
tests to assess the influence of the gauge choice on the numerical accuracy of
solutions to the TDGL equations computed using the mixed FEM proposed in \cite{Gao2015a}.
They introduced a general gauge with $\phi= - \omega \mathrm{div} \bA$, where $\omega=0$
corresponds to the temporal gauge, $\omega=1$ to the Lorentz gauge, and $\omega=\infty$ to the
Coulomb gauge. They showed in \cite{Tain2024a} that if $\omega$ is close to zero, i.e., as the
gauge approaches the temporal gauge, the mixed FEM in \cite{Gao2015a} may yield spurious
giant vortices near re-entrant corners. Indeed, our numerical test in Section~\ref{sec:numerical-experiments}
also shows that using N\'ed\'elec edge elements to discretize equation (\ref{eq:tdgl-zero-phi:A}) for
$\bA$ directly yields a very similar spurious vortex pattern. These observations suggest that this failure occurs in conventional temporal-gauge discretizations.

Nevertheless, the temporal gauge remains the most attractive framework for
numerical simulation, owing to its simple form and natural energy decay. This
naturally leads to the following problem: Can the TDGL equations be solved
reliably in the temporal gauge? To address this problem, we develop a
gauge-invariant finite element method whose discrete solutions are independent
of the gauge. This enables us to compute in the temporal gauge for convenience
while avoiding the artifacts of conventional temporal-gauge discretizations.

Several gauge-invariant numerical methods have been proposed for the TDGL and
related models. Du~\cite{Du1998b} studied gauge-invariant finite-difference approximations
of the TDGL model on rectangular grids. Du, Nicolaides, and Wu~\cite{Du1998c} also proposed
and analyzed a covolume approximation of the Ginzburg--Landau model on
triangular meshes that preserves discrete gauge invariance. Later, Gao, Ju,
and Xie \cite{Gao2019a} analyzed a gauge-invariant stabilized semi-implicit Euler finite
difference method. In the physics literature, this finite difference method is
known as the ``{\it linked variable approach}'', e.g., \cite{Coskun1997,Frahm1991,Gropp1996a,Gunter2002,Kato1993,Sadovskyy2015a,Winiecki2002}. In contrast, the
systematic construction of gauge-invariant finite element methods on
unstructured triangular and tetrahedral meshes was pioneered by Christiansen and
Halvorsen~\cite{Christiansen2011d} for the magnetic Schr\"odinger eigenvalue problem. This approach
was later extended in \cite{Christiansen2012a} to a general gauge theory, establishing a discrete
framework that preserves the essential gauge structure of the continuous problem
on such grids. To the best of our knowledge, no gauge-invariant finite element
method on triangular or tetrahedral meshes has been developed for the TDGL
equations. In this paper, we extend the spatial gauge-invariant framework of \cite{Christiansen2011d,Christiansen2012a}
to the TDGL equations by constructing a fully discrete FEM that preserves gauge
invariance. The resulting method is decoupled and linear at each time step and
uses the lowest-order finite element spaces on triangular and tetrahedral
meshes. While the scheme is gauge-invariant in general, the present computations
are restricted to the temporal gauge. We rigorously prove that the scheme
preserves a discrete maximum bound principle for the order parameter,
$|\psi_h^n|\le 1$, and a discrete energy law under mild restrictions on the time
step and the mesh. In domains with re-entrant corners, the scheme computes the
correct physical solution, whereas conventional temporal-gauge discretizations
with N\'ed\'elec edge elements produce an unphysical solution on the same mesh.

The rest of the paper is organized as follows. In Section~\ref{sec:fully-discrete-scheme}, we introduce the
finite element spaces and present the fully discrete gauge-invariant scheme,
along with its formulation in the temporal gauge. In Section~\ref{sec:theoretical-analysis}, we establish
the discrete maximum bound principle and discrete energy stability. In
Section~\ref{sec:numerical-experiments}, we present several numerical examples to demonstrate the accuracy
and stability of the method, including computations on nonconvex domains.
Finally, concluding remarks are given in Section~\ref{sec:conclusion}.

\section{A gauge-invariant finite element scheme}
\label{sec:fully-discrete-scheme}

In this section, we introduce the notation and finite element setting, and
present a fully discrete gauge-invariant scheme for the TDGL equations \eqref{eq:tdgl-no-fixed:psi}--\eqref{eq:tdgl-no-fixed:A}
with the boundary and initial conditions \eqref{eq:continuous-boundary-conditions}--\eqref{eq:continuous-initial-conditions}.

\subsection{Notation and Finite Element Spaces}
\label{subsec:discrete-spaces}

Let $W^{k,p}(\Omega)$ denote the standard Sobolev space on $\Omega$, and let
$W^{k,p}_0(\Omega)$ be its subspace with vanishing trace \cite{Adams2003a}. As is standard, we
write $H^{k}(\Omega):=W^{k,2}(\Omega)$ and $H^{k}_0(\Omega):=W^{k,2}_0(\Omega)$, with
$H^{0}(\Omega)=L^2(\Omega)$. For complex-valued functions, $\cH^{k}(\Omega)$ denotes the space
of functions whose real and imaginary parts belong to $H^{k}(\Omega)$, and we set
$\cL^2(\Omega):=\cH^{0}(\Omega)$. For any $u,v\in\cL^2(\Omega)$, the corresponding inner product
is given by
\[
  (u,v):=\int_\Omega u\,v^{*}\,\dd \bx.
\]
Moreover, we define the curl-conforming space by
\begin{equation}
  \bH(\bCurl,\Omega):=
  \bigl\{ \bv \in [L^2(\Omega)]^d \mid \Curl\bv \in \mathbf{L}^2(\Omega) \bigr\},
\end{equation}
where $\mathbf{L}^2(\Omega) = L^2(\Omega)$ for $d=2$ and $\mathbf{L}^2(\Omega) = [L^2(\Omega)]^3$
for $d=3$ \cite{Monk2003a}. The associated norm is
\[
  \|\bv\|_{\bH(\bCurl)} = \|\bv\|_{\bH(\bCurl,\Omega)} := \bigl(\|\bv\|^2_{L^2(\Omega)}+\|\Curl\bv\|^2_{\mathbf{L}^2(\Omega)}\bigr)^{\frac12}.
\]
For simplicity, we omit $\Omega$ and write $\bH(\bCurl)$ when no ambiguity arises.

Let $\mathcal T_h$ be a conforming shape-regular tetrahedral partition (or triangular partition in 2D) of $\Omega$
with mesh size $h:=\max_{T\in \mathcal{T}_h}\operatorname{diam}(T)$.
Let $\mathcal{N}_{h}= \{ \bm{x}_i\}$ be the set of nodes of $\mathcal{T}_h$.
We assign a fixed orientation to each edge and denote the resulting set of oriented edges by
$\mathcal{E}_{h}$, where each geometric edge appears exactly once. The oriented
edge pointing from vertex $\bx_i$ to vertex $\bx_j$ is denoted by
$e_{i,j}=(i,j)\in\mathcal E_h$, with unit tangent vector
$\boldsymbol t_e:=(\bx_j-\bx_i)/|\bx_j-\bx_i|$. Throughout this paper, $C$
denotes a generic positive constant independent of the mesh size $h$.

We introduce the finite element spaces
\[
  V_h=\mathcal P_1(\mathcal T_h;\mathbb C), \qquad X_h=\mathcal{ND}_1(\mathcal T_h), \qquad S_h=\mathcal P_1(\mathcal T_h;\mathbb R),
\]
where $V_h$ and $S_h$ consist of continuous piecewise linear complex-valued and
real-valued functions, respectively, and $X_h$ denotes the lowest-order
N\'ed\'elec edge element space of the first kind \cite{Monk2003a}. These spaces satisfy the
subcomplex inclusion property
\[
  \Grad S_h\subset X_h.
\]
In addition, let $\mathcal{I}_h$ denote the nodal Lagrange interpolation
operator and $\Pi_h$ denote the N\'ed\'elec projection operator.

Let $\{\varphi_i\}$ denote the standard scalar nodal basis functions. For any
$u_h\in V_h$, we write
\[u_i:=u_h(\bx_i).\] The lumped $\cL^2$ inner product is defined by
\begin{equation}
  (u,v)_{0,h} := \int_{\Omega} \mathcal{I}_h(u v^*) \dd \bx = \sum_{\bm{x}_i \in \mathcal{N}_h} m_i (\mathcal{I}_h u)_i (\mathcal{I}_h v)^{*}_i,
  \qquad m_i := \int_\Omega\varphi_i\,\dd \bx = \frac{1}{d+1}\sum_{T\ni\bx_i}|T|,\label{eq:def-of-lumping-inner}
\end{equation}
which is obtained via row-sum lumping of the mass matrix.

To construct a gauge-invariant discretization of the covariant derivative,
following the gradient mass-lumping construction of Christiansen and
Halvorsen~\cite{Christiansen2011d}, we associate with each edge $e=(i,j)\in\mathcal{E}_h$ the weight
\begin{equation}
  \mu_e := -\int_\Omega \Grad\varphi_i\cdot\Grad\varphi_j\,\dd \bx
  = \sum_{T\supset e}\mu_{e,\scriptscriptstyle T}, \qquad \mu_{e,\scriptscriptstyle T} := -\int_T \Grad\varphi_i\cdot\Grad\varphi_j\,\dd \bx.
\end{equation}
The local contribution $\mu_{e,\scriptscriptstyle T}$ admits an explicit geometric representation
depending on the dimension:
\begin{itemize}
  \item For $d=2$: let $T\supset e$ be a triangle with opposite vertex $k$, and let
        $\theta_{\scriptscriptstyle T,e}$ be the interior angle of $T$ opposite to $e$. Then
        \begin{equation}
          \mu_{e,\scriptscriptstyle T} = \frac12\cot\theta_{\scriptscriptstyle T, e}, \qquad \mu_e
          = \frac12\sum_{T\supset e}\cot\theta_{\scriptscriptstyle T, e}.
        \end{equation}
  \item For $d=3$: let $T\supset e$ be a tetrahedron with $e=e_{i,j}$, and let
        $e' = (k,l)$ denote the opposite edge in $T$ joining the remaining two
        vertices $\bx_k$ and $\bx_l$. If $\theta_{\scriptscriptstyle T,e'}$ denotes the interior
        dihedral angle of $T$ along the opposite edge $e'$, then
        \begin{equation}
          \mu_{e,\scriptscriptstyle T} = \frac16\,|\bx_k-\bx_l|\cot\theta_{\scriptscriptstyle T,e'}, \qquad \mu_e
          = \frac16\sum_{T\supset e}|\bx_k-\bx_l|\cot\theta_{\scriptscriptstyle T,e'}.
        \end{equation}
\end{itemize}
In both cases, the weight $\mu_e$ is independent of the chosen orientation of $e$.

We further assume that the mesh family $\{\mathcal{T}_h\}_{h>0}$  satisfies the nonnegative
edge-weight condition
\begin{equation}
  \int_\Omega \Grad\varphi_i\cdot\Grad\varphi_j\,\dd \bx \le 0 \qquad \text{for every } e=(i,j)\in\mathcal{E}_h,\label{eq:mesh-degree-assume}
\end{equation}
which guarantees that $\mu_e \ge 0$ for all edges. This condition is well known in
connection with discrete maximum principles and is readily satisfied by
non-obtuse meshes; we refer the reader to Christiansen and Halvorsen~\cite{Christiansen2011d} for
further details.

For any $\bv_h \in X_h$, let
\[v_e := \int_e \bv_h \cdot \boldsymbol{t}_e\,\dd s
\]
denote its degree of freedom on the edge $e \in \mathcal{E}_h$. Induced by the
edge weights $\mu_e$, we define the edge-weighted bilinear form on $X_h$ and its
associated seminorm by
\begin{equation}
  \langle \bu_h, \bv_h \rangle_{\mathcal{E}_h} := \sum_{e\in\mathcal{E}_h} \mu_e u_e v_e, \qquad |\bv_h|_{\mathcal{E}_{h}}^2 := \langle \bv_h, \bv_h \rangle_{\mathcal{E}_h}, \qquad \forall\,\bu_h, \bv_h \in X_h.
  \label{eq:edge-lumped-inner-product}
\end{equation}

The following lemma bounds this seminorm uniformly in terms of the standard
$L^2$ norm on $X_h$.
\begin{lemma}[Uniform upper bound for the edge-weighted seminorm on $X_h$]
  \label{lem:edge-seminorm-bound}
  Let $\{\mathcal T_h\}_{h>0}$ be a shape-regular family of simplicial meshes in
  $\mathbb R^d$ ($d=2,3$). Then there exists a positive constant $C$,
  independent of $h$ and depending only on the shape-regularity of the mesh,
  such that
  \begin{equation}
    |\bv_h|_{\mathcal{E}_{h}}  \le C \|\bv_h\|_{L^2(\Omega)}, \qquad \forall\,\bv_h\in X_h\,.\label{eq:upper-bound-edge-norms}
  \end{equation}
  We therefore define
  \begin{equation}
    \Lambda := \sup_{0\neq\bv_h\in X_h} \frac{ |\bv_h|_{\mathcal{E}_{h}} }{\|\bv_h\|_{L^2(\Omega)}}.\label{eq:def-of-lambda}
  \end{equation}
\end{lemma}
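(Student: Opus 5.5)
The proof is a standard local scaling argument, carried out element by element and then summed. Since $\mu_e=\sum_{T\supset e}\mu_{e,T}$ and the nonnegative edge-weight condition \eqref{eq:mesh-degree-assume} gives $\mu_e\ge 0$, we first bound $\mu_e\le\sum_{T\supset e}|\mu_{e,T}|$. This yields
\[
|\bv_h|_{\mathcal E_h}^2=\sum_{e}\mu_e v_e^2\le \sum_{T\in\mathcal T_h}\sum_{e\subset T}|\mu_{e,T}|\,v_e^2 .
\]
It therefore suffices to show, for each $T$ and each $\bv_h\in X_h$, the local estimate
\[
\sum_{e\subset T}|\mu_{e,T}|\,v_e^2\le C\|\bv_h\|_{L^2(T)}^2,
\]
with $C$ depending only on shape regularity. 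Summing over $T$ then gives \eqref{eq:upper-bound-edge-norms}. The supremum $\Lambda$ in \eqref{eq:def-of-lambda} is finite and bounded by this $C$.

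\emph{Bound on the weights.} Shape regularity gives $|\Grad\varphi_i|_{T}|\le C h_T^{-1}$. Hence
\[
|\mu_{e,T}|\le \int_T|\Grad\varphi_i||\Grad\varphi_j|\,\dd\bx\le C|T|h_T^{-2}\le C h_T^{d-2}.
\]
The explicit cotangent formulas give the same bound, since the angles are bounded away from $0$ and $\pi$.

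\emph{Bound on the degrees of freedom.} We must show $|v_e|\le C h_T^{1-d/2}\|\bv_h\|_{L^2(T)}$. We use the covariant Piola map $\bv_h|_T=B_T^{-T}\hat{\bv}\circ F_T^{-1}$ from the reference simplex $\hat T$. This map preserves edge degrees of freedom, so $v_e=\hat v_{\hat e}$. On $\hat T$, the finite-dimensional space $\mathcal{ND}_1(\hat T)$ is unisolvent with respect to its edge DOFs, so $\|\cdot\|_{L^2(\hat T)}$ is a norm on it. Equivalence of norms in finite dimensions then gives $|\hat v_{\hat e}|\le \hat C\|\hat{\bv}\|_{L^2(\hat T)}$. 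Transforming back,
\[
\|\hat{\bv}\|_{L^2(\hat T)}\le \|B_T\|\,|\det B_T|^{-1/2}\|\bv_h\|_{L^2(T)}\le C h_T\,h_T^{-d/2}\|\bv_h\|_{L^2(T)} .
\]
This is the one place shape regularity is used for the geometry of the map, through $\|B_T\|\le Ch_T$ and $|\det B_T|\sim h_T^d$.

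\emph{Combining.} We get
\[
|\mu_{e,T}|\,v_e^2\le C h_T^{d-2}\cdot h_T^{2-d}\|\bv_h\|_{L^2(T)}^2 ,
\]
and the powers of $h_T$ cancel. Since each element has a fixed number of edges, the local estimate follows, and summation over $T$ finishes the proof.

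The only step with real content is this DOF estimate, in particular tracking the scaling under the covariant Piola transform. It requires unisolvence on $\hat T$ plus the standard bounds on $B_T$. The rest is bookkeeping.
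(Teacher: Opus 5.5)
Your proposal is correct and follows the paper's proof: covariant Piola pull-back preserving the edge degrees of freedom, norm equivalence on $\mathcal{ND}_1(\widehat T)$, the scaling $\|\widehat{\bv}_h\|_{L^2(\widehat T)}^2\le Ch_T^{2-d}\|\bv_h\|_{L^2(T)}^2$, the weight bound $|\mu_{e,T}|\le Ch_T^{d-2}$, and summation over elements. The differences are minor: you justify the weight bound via $|\Grad\varphi_i|\le Ch_T^{-1}$, which the paper simply asserts, and you do not need $\mu_e\ge0$ for your first step, since $\mu_e\le\sum_{T\supset e}|\mu_{e,T}|$ follows from the triangle inequality regardless.
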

\begin{proof}
  For each simplex $T\in\mathcal T_h$, let $\widehat T$ be the reference simplex
  with affine mapping $F_T:\widehat T\to T$, and let
  $\widehat{\bv}_h \in \mathcal{ND}_1(\widehat T)$ be the pull-back of $\bv_h|_T$
  under the covariant Piola transformation (see Monk~\cite{Monk2003a}). Then the edge degrees
  of freedom satisfy
  \begin{equation}
    \int_e \bv_h\cdot\boldsymbol t_e\,\dd s = \int_{\widehat e}\widehat{\bv}_h \cdot\widehat{\boldsymbol t}_{\widehat e}\,\dd\widehat{s}, \qquad \forall\,e \in \mathcal{E}_{h}.
  \end{equation}
  Since the reference N\'ed\'elec space \(\mathcal{ND}_1(\widehat T)\) is
  finite-dimensional, by norm equivalence on the reference element, there exists
  a constant \(C_1>0\) such that
  \begin{equation}
    \sum_{\widehat e\subset\partial\widehat T}
    \left|
      \int_{\widehat e}\widehat{\bv}_h\cdot\widehat{\boldsymbol t}_{\widehat e}\,\dd\widehat{s}
    \right|^2
    \le
    C_1\|\widehat{\bv}_h\|_{L^2(\widehat T)}^2.
  \end{equation}
  The scaling properties of the covariant Piola mapping and mesh
  shape-regularity yield the inverse scaling estimate
  \begin{equation}
    \|\widehat{\bv}_h\|_{L^2(\widehat T)}^2
    \le
    C_2 h_T^{2-d}\|\bv_h\|_{L^2(T)}^2.
  \end{equation}
  Meanwhile, shape-regularity ensures that $|\mu_{e,\scriptscriptstyle T}|\le C_3 h_T^{d-2}$. This
  immediately leads to the local estimate
  \begin{equation}
    \sum_{e\subset\partial T}\mu_{e,\scriptscriptstyle T}|v_e|^2
    \le
    \sum_{e\subset\partial T}|\mu_{e,\scriptscriptstyle T}|\,|v_e|^2
    \le
    C_4\|\bv_h\|_{L^2(T)}^2,
  \end{equation}
  where $C_4=C_{1}C_{2}C_{3}$. Finally, using \(\mu_e=\sum_{T\supset e}\mu_{e,\scriptscriptstyle T}\), we arrive
  at
  \begin{equation*}
    \sum_{e\in\mathcal E_h}\mu_e|v_e|^2
    =
    \sum_{T\in\mathcal T_h}\sum_{e\subset\partial T}\mu_{e,\scriptscriptstyle T}|v_e|^2
    \le
    C_4\sum_{T\in\mathcal T_h} \|\bv_h\|_{L^2(T)}^2
    \le
    C_4\|\bv_h\|_{L^2(\Omega)}^2,
  \end{equation*}
  which completes the proof.
\end{proof}

\subsection{Fully Discrete Gauge-Invariant Scheme}
\label{subsec:fully-discrete-scheme}

Let $N \in \mathbb{N}$ and consider a uniform partition of the time interval
$[0, T]$ with step size $\tau := T/N$ and discrete time levels $t_n := n\tau$ for
$n = 0, 1, \dots, N$. At time level $t_n$, let $A_e^n$ denote the N\'ed\'elec
degree of freedom of $\bA_h^n \in X_h$ associated with each oriented edge
$e = (i,j) \in \mathcal{E}_h$.

To preserve gauge invariance at the discrete level, following~\cite{Christiansen2011d,Christiansen2012a}, we define the
link variable $U_e^n$ and the discrete covariant derivative operator
$D^n:V_h\to X_h+\ri X_h$, which acts as the discrete counterpart of the continuous
covariant derivative $(\frac{\ri}{\kappa}\nabla+\bA)\psi$, by
\begin{equation}
  U_e^n:=\exp(\ri\kappa A_e^n), \qquad
  (D^n\psi_h)_e:=\frac{\ri}{\kappa}
  \left(\psi_j-U_e^n\psi_i\right),
  \qquad \forall\,\psi_h\in V_h.
\end{equation}
Here $X_h+\ri X_h =\{\mathbf u_h+\ri\mathbf v_h \mid \mathbf u_h,\mathbf v_h\in X_h\}$,
$\psi_i:=\psi_h(\bx_i)$ is the nodal value at vertex $\bx_i$, and $(D^n\psi_h)_e$ denotes
the complex-valued N\'ed\'elec edge degree of freedom of $D^n\psi_h$ on the
oriented edge $e=(i,j)\in\mathcal E_h$. For notational brevity, we write
$D_e^n \psi_h := (D^n \psi_h)_e$. We define the discrete covariant energy form
\begin{equation}
  a_h^n(\psi_h, \varphi_h) := \sum_{e\in\mathcal E_h} \mu_e D_e^n \psi_h\, (D_e^n \varphi_h)^{*},
  \qquad \forall\,\psi_h, \varphi_h \in V_h,\label{eq:def-of-covariance-der}
\end{equation}

We introduce the discrete electric potential $\phi_h^{n+1/2}\in S_h$ with nodal
values denoted by $\phi_i^{n+1/2}:=\phi_h^{n+1/2}(\bx_i)$. We define
$\sphi \psi_h^{\,n+1}\in V_h$ by its nodal values
\begin{equation}
  \bigl(\sphi \psi_{h}^{n+1}\bigr)_i := \exp\!\left(\ri\kappa\tau\phi_i^{n+1/2}\right) \psi_i^{n+1}.
\end{equation}
The discrete covariant time derivative and discrete electric field are then
defined by
\begin{equation}
  \mathcal{D}_{\phi}^{\tau} \psi_h^n := \frac{\sphi \psi_h^{\,n+1}-\psi_h^n}{\tau} \in V_h,
  \qquad
  \bE_h^{n+1/2} := \frac{\bA_h^{n+1}-\bA_h^n}{\tau} +\nabla\phi_h^{n+1/2} \in X_h.\label{eq:def-time-diff-phi}
\end{equation}
The discrete supercurrent $\bJ_h^{n+1/2} \in X_h$ is defined by its edge degrees
of freedom:
\begin{equation}
  J_e^{n+1/2} := \operatorname{Re}\!\left[ (\sphi \psi_{h}^{\,n+1})_{j}^* D_e^n (\sphi \psi_h^{\,n+1}) \right]
  =\frac{1}{\kappa} \Imop\!\left[ (\sphi \psi_{h}^{\,n+1})_{j}^* U_e^n (\sphi \psi_i^{\,n+1}) \right]\,.
  \label{eq:discrete-current-def}
\end{equation}
With the above notation, the fully discrete scheme is to find $(\psi_h^{n+1}, \bA_{h}^{n+1})\in V_h\times X_{h}$
such that for all $(\varphi_h, \bv_h) \in V_h \times X_h$
\begin{empheq}[left=\empheqlbrace]{align}
  & \eta \left(\mathcal{D}^{\tau}_{\phi} \psi_h^n,\varphi_h\right)_{0,h}
  + a_h^n\left(\sphi \psi_h^{\,n+1},\varphi_h\right)
  + \left((|\psi_h^n|^2-1)\psi_h^n,\varphi_h\right)_{0,h} = 0,
  \label{eq:local-order-parameter-equation}
  \\
  & \left(\bE_h^{n+1/2},\bv_h\right) + \left(\operatorname{curl}\bA_h^{n+1},\operatorname{curl}\bv_h\right)
  + \left\langle \bJ_h^{n+1/2},\bv_h\right\rangle_{\mathcal E_h} = \left(H,\operatorname{curl}\bv_h\right).
  \label{eq:local-vector-potential-equation}
\end{empheq}
The corresponding discrete energy at time level $t_n$ is defined by
\begin{align}
  \mathcal{G}_h^n(\psi_h^n, \bA_h^n)
  &:= \frac{1}{2}\sum_{e\in\mathcal E_h} \mu_e |D_e^n \psi_h^n|^{2}
  + \frac{1}{4}\sum_{\bm{x}_i \in \mathcal{N}_h} m_i \left(|\psi_i^n|^2 - 1\right)^2
  + \frac{1}{2}\|\Curl\bA_h^n - H\|_{L^2(\Omega)}^2
    \nn\\
  &= \frac12 a_h^n(\psi_h^n, \psi_h^n) + \frac{1}{4} (|\psi_h^n|^2-1,|\psi_h^n|^2-1)_{0,h}
    + \frac{1}{2}\|\Curl\bA_h^n - H\|_{L^2(\Omega)}^2 \,.
  \label{eq:local-discrete-energy}
\end{align}

To examine the gauge properties of the proposed scheme, we introduce the
discrete gauge transformations. Let $\chi_h^{n} \in S_h$ be a real-valued function.
The discrete gauge-transformed variables are defined by
\begin{equation}
  \widetilde\psi_h^{\,n} := \mathcal{I}_h\!\left(\exp(\ri\kappa\chi_h^n)\psi_h^n\right),
  \qquad
  \widetilde{\bA}_h^{\,n}:= \bA_h^n + \nabla\chi_h^n,
  \qquad
  \widetilde\phi_h^{\,n+1/2} := \phi_h^{n+1/2} - \frac{\chi_h^{n+1} - \chi_h^n}{\tau},
  \label{eq:local-discrete-gauge-transform}
\end{equation}
which satisfy
$\widetilde{\psi}_h^{\,n}(\bx_i) = \exp(\ri\kappa\chi_h^n(\bx_i))\psi_h^n(\bx_i)$ at each
vertex $\bx_i$.

The following lemma establishes that the proposed fully discrete FEM
\eqref{eq:local-order-parameter-equation}--\eqref{eq:local-vector-potential-equation} is invariant
under the discrete gauge transformation \eqref{eq:local-discrete-gauge-transform}.

\begin{lemma}[Discrete gauge invariance]
  \label{lem:local-discrete-gauge-covariance}
  The discrete system \eqref{eq:local-order-parameter-equation}--\eqref{eq:local-vector-potential-equation} is invariant under the discrete gauge
  transformation \eqref{eq:local-discrete-gauge-transform}. Consequently, the equalities
  \[
    |\widetilde\psi_i^{n}| = |\psi_i^n| \qquad \text{and} \qquad \Curl\widetilde{\bA}_h^n = \Curl\bA_h^n
  \]
  hold, and the discrete energy is invariant:
  \begin{equation}
    \mathcal{G}_h^n(\widetilde\psi_h^n,\widetilde{\bA}_h^n) = \mathcal{G}_h^n(\psi_h^n,\bA_h^n).\label{eq:energy-gauge-covariance}
  \end{equation}
\end{lemma}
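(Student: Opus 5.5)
The argument is a direct verification at the level of nodal and edge degrees of freedom. Every ingredient of the scheme (the lumped inner product, the form $a_h^n$, the current $J_e^{n+1/2}$, and the energy $\mathcal G_h^n$) is written in terms of the nodal values $\psi_i$ and the edge values $A_e$. I would therefore first record how these transform. For a vertex $\bx_i$, write $g_i^n:=\exp(\ri\kappa\chi_i^n)$, where $\chi_i^n=\chi_h^n(\bx_i)$; then $\widetilde\psi_i^n=g_i^n\psi_i^n$. Since $\chi_h^n\in S_h$ is piecewise linear, the fundamental theorem of calculus along $e=(i,j)$ gives $\int_e\nabla\chi_h^n\cdot\boldsymbol t_e\,\dd s=\chi_j^n-\chi_i^n$. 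This is where $\nabla S_h\subset X_h$ enters. Hence $\widetilde A_e^n=A_e^n+\chi_j^n-\chi_i^n$ and
\[
  \widetilde U_e^n=g_j^n\,U_e^n\,(g_i^n)^{*}.
\]
The key identity then follows. For any $\psi_h$ and $\widetilde\psi_h:=\mathcal I_h(\exp(\ri\kappa\chi_h^n)\psi_h)$ we have
\[
  D_e^{n,\sim}\widetilde\psi_h=\frac{\ri}{\kappa}\bigl(g_j^n\psi_j-g_j^nU_e^n(g_i^n)^*g_i^n\psi_i\bigr)=g_j^n\,D_e^n\psi_h .
\]
Since $|g_j^n|=1$, this gives $a_h^{n,\sim}(\widetilde\psi_h,\widetilde\varphi_h)=a_h^n(\psi_h,\varphi_h)$ whenever $\varphi_h$ is transformed by the same $\chi_h^n$.

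Next I would treat the time-level mixing, which I expect to be the main bookkeeping obstacle. The form $a_h^n$ uses $U^n$, but it is applied to $\sphi\psi_h^{n+1}$, which combines data from level $n+1$ with $\phi^{n+1/2}$. Using the definition of $\widetilde\phi_h^{n+1/2}$, I would compute at each node
\[
  \exp(\ri\kappa\tau\widetilde\phi_i^{n+1/2})\,\widetilde\psi_i^{n+1}=\exp(\ri\kappa\tau\phi_i^{n+1/2})\,g_i^{n+1}(g_i^{n+1})^*g_i^n\,\psi_i^{n+1}=g_i^n(\sphi\psi_h^{n+1})_i .
\]
So $\widetilde{\sphi}\widetilde\psi_h^{n+1}$ is exactly the level-$n$ gauge transform of $\sphi\psi_h^{n+1}$. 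It follows that $\widetilde{\mathcal D}^\tau_\phi\widetilde\psi_h^n=\mathcal I_h(e^{\ri\kappa\chi_h^n}\mathcal D^\tau_\phi\psi_h^n)$. The nonlinear term $(|\psi_h^n|^2-1)\psi_h^n$ also transforms nodally by $g_i^n$, because $|\widetilde\psi_i^n|=|\psi_i^n|$.

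To handle the $\psi$-equation, I would take test functions $\widetilde\varphi_h=\mathcal I_h(e^{\ri\kappa\chi_h^n}\varphi_h)$. The map $\varphi_h\mapsto\widetilde\varphi_h$ is a bijection of $V_h$, since it just multiplies nodal values by unit-modulus factors. The lumped inner product is a weighted sum of nodal products, so the factors $g_i^n(g_i^n)^*=1$ cancel. The $a_h^n$ term is invariant by the first step. Therefore the transformed $\psi$-equation holds for all $\widetilde\varphi_h$ if and only if the original one holds for all $\varphi_h$.

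For the $\bA$-equation, the steps are as follows. $\widetilde\bE_h^{n+1/2}=\bE_h^{n+1/2}$, because the $\nabla(\chi^{n+1}-\chi^n)/\tau$ contributions cancel. $\Curl\widetilde\bA_h^{n+1}=\Curl\bA_h^{n+1}$, since $\Curl\nabla=0$. The current $J_e$ is unchanged: inserting the transformed quantities gives
\[
  (g_j^n)^*(\sphi\psi^{n+1})_j^*\,g_j^nU_e^n(g_i^n)^*\,g_i^n(\sphi\psi^{n+1})_i,
\]
in which all the phases cancel. Hence the $\bA$-equation is identical for the same test functions $\bv_h$.

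The consequences then follow directly. $|\widetilde\psi_i^n|=|\psi_i^n|$ holds since $|g_i^n|=1$, and the curl identity was shown above. For the energy, $|D_e^{n,\sim}\widetilde\psi_h^n|=|D_e^n\psi_h^n|$ by the key identity, the nodal potential term depends only on $|\psi_i^n|$, and the curl term is unchanged. This gives \eqref{eq:energy-gauge-covariance}.
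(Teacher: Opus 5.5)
Your proposal is correct and follows essentially the same route as the paper: you use the nodal and edge transformation rules $\widetilde\psi_i^n=g_i^n\psi_i^n$ and $\widetilde U_e^n=g_j^nU_e^n(g_i^n)^*$, the observation that the transformed $\sphi\psi_h^{n+1}$ is the level-$n$ gauge transform of $\sphi\psi_h^{n+1}$, and the bijection $\varphi_h\mapsto\widetilde\varphi_h$ on test functions. You then cancel the unit-modulus phases in the lumped products, in $a_h^n$, in $J_e^{n+1/2}$, in $\bE_h^{n+1/2}$ and in $\Curl\bA_h$, and in the energy, exactly as the paper does. Your justification that the edge degree of freedom of $\nabla\chi_h^n$ equals $\chi_j^n-\chi_i^n$ fills in a step the paper only states.
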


\begin{proof}
  To simplify the notation, let \(g_i^n := \exp(\ri\kappa\chi_h^n(\bx_i))\) throughout
  the proof, then we have
  \begin{equation}
    \widetilde{\psi}^{n}_{i} = g_{i}^{n} \psi^{n}_{i}\,.\label{eq:local-psi-covariance}
  \end{equation}
  For each oriented edge $e=(i,j)$, the transformation of $\bA_h^n$ gives
  \[
    \widetilde A_e^{\,n} = A_e^n+\chi_j^n-\chi_i^n.
  \]
  Hence the spatial links satisfy
  \begin{equation}
    \widetilde U_e^n:= \exp(\ri\kappa \widetilde{A}_e^{\,n})
    =\exp(\ri\kappa(A_e^n+\chi_j^n-\chi_i^n))=
    g_j^nU_e^n(g_i^n)^*.
    \label{eq:local-link-covariance}
  \end{equation}

  Applying the definition of the transported order parameter to the transformed
  variables, we obtain at each node
  \begin{align}
    \left(\widetilde{\mathcal{S}}_{\phi} \widetilde{\psi}^{n+1}_{h}\right)_{i}&:=\exp(\ri\kappa\tau\widetilde{\phi}_{i}^{\,n+1/2})
                                                                          \widetilde{\psi}_i^{\,n+1} = \exp(\ri\kappa\tau\phi_{i}^{\,n+1/2})\exp[\ri \kappa (\chi^{n}_{i}-\chi^{n+1}_{i})]g_{i}^{n+1}
                                                                          \psi_i^{n+1}\nn
    \\
                                                                        &=
                                                                          \exp(\ri\kappa\tau\phi_i^{n+1/2})g_i^n
                                                                          \psi_i^{n+1}=
                                                                          g_i^n\left(\sphi \psi_{h}^{n+1}\right)_{i}\,.
                                                                          \label{eq:local-transported-covariance}
  \end{align}
  Consequently, by~\eqref{eq:def-time-diff-phi}, \eqref{eq:local-transported-covariance} and $|g_{i}^{n}|=1$ we have
  \begin{equation}
    \left(\widetilde{\mathcal{D}}_{\phi}^{\,\tau}
      \widetilde{\psi}_h^n\right)_{i}
    :=
    g_i^n
    \left(\mathcal{D}^{\tau}_{\phi}\psi_h^n\right)_{i},\qquad
    (|\widetilde\psi_i^n|^2-1)\widetilde\psi_i^n
    =
    g_i^n(|\psi_i^n|^2-1)\psi_i^n.
    \label{eq:local-time-and-nonlinear-covariance}
  \end{equation}
  Given any $\varphi_h\in V_h$, we define $\widetilde{\varphi}_{h} \in V_{h}$ as
  \(\widetilde{\varphi}_h = \Ih\!\left(\exp(\ri\kappa\chi_h^n)\varphi_h\right)\), i.e.,
  \begin{equation}
    \widetilde{\varphi}_i :=\widetilde{\varphi}_h(\bx_i) =\exp(\ri\kappa\chi_i^n)\varphi_i = g^n_i \varphi_i.\label{eq:local-varphi-covariance}
  \end{equation}
  By \eqref{eq:local-psi-covariance} and \eqref{eq:local-link-covariance}, we obtain
  \begin{equation}
    \widetilde{D}_e^n
    \left(\widetilde{\mathcal{S}}_{\phi} \widetilde{\psi}_h^{\,n+1}\right)
    =
    g_j^nD_e^n\left(\sphi \psi_h^{\,n+1}\right),\quad\text{and}\quad
    \widetilde D_e^n\widetilde{\varphi}_h
    = g_j^nD_e^n\varphi_h,\label{eq:local-space-covariance}
  \end{equation}
  where $\widetilde D_e^n$ is the covariant edge operator associated with
  $\widetilde U_e^n$. Then we have
  \begin{align}
    \widetilde{a}_{h}^{n} (\widetilde{\mathcal{S}}_{\phi} \widetilde{\psi}_{h}^{n+1},\widetilde{\varphi}_{h})&:=  \sum_{e\in\mathcal E_h} \mu_e \widetilde{D}_e^n (\widetilde{\mathcal{S}}_{\phi}\widetilde{\psi}_{h}^{\,n+1})\, (\widetilde{D}_e^n \widetilde{\varphi}_h)^{*}\nn\\
                                                                                                 &=\sum_{e\in\mathcal E_h}\mu_e D_e^n\left(\sphi \psi_h^{\,n+1}\right)\left(D_e^n\varphi_{h}\right)^{*}=a_{h}^{n} (\mathcal{S}_{\phi} \psi_{h}^{n+1},\varphi_{h})\,.\label{eq:local-energy-covariance}
  \end{align}
  The invariance of \eqref{eq:local-order-parameter-equation} then follows from \eqref{eq:local-time-and-nonlinear-covariance}, \eqref{eq:local-varphi-covariance}, \eqref{eq:local-energy-covariance}, and the fact that
  $\varphi_h\mapsto\widetilde\varphi_h$ is a bijection on $V_h$.

  For the vector-potential equation, direct substitution gives
  \begin{align}
    \widetilde{\bE}_h^{n+1/2} &:=
                                \frac{\widetilde{\bA}_h^{\,n+1}-\widetilde{\bA}_{h}^{\,n}}{\tau} +\nabla\widetilde{\phi}_h^{\,n+1/2}\nn\\
                              &=\frac{\bA_h^{n+1}-\bA_h^n}{\tau}
                                +\frac{\Grad\chi_h^{n+1}-\Grad\chi_h^n}{\tau}
                                +\Grad\left(
                                \phi_h^{n+1/2}
                                -\frac{\chi_h^{n+1}-\chi_h^n}{\tau}
                                \right)\nn\\
                              &=\bE_h^{n+1/2}\,.\label{eq:local-electric-covariance}
  \end{align}
  Moreover, $\Curl\Grad\chi_h^n=0$ implies
  \begin{equation}
    \Curl\widetilde{\bA}_h^n=\Curl\bA_h^n.\label{eq:local-curl-covariance}
  \end{equation}
  The edge current is unchanged as well. By \eqref{eq:local-transported-covariance},
  \eqref{eq:local-space-covariance} and $|g_j^n|=1$, there holds
  \begin{align}
    \widetilde J_e^{\,n+1/2}
    & :=\operatorname{Re}\!\left[ (\widetilde{\mathcal{S}}_{\phi} \widetilde{\psi}_{h}^{\,n+1})_{j}^* \widetilde{D}_e^n (\widetilde{\mathcal{S}}_{\phi}\widetilde{\psi}_h^{\,n+1}) \right]\nn\\
    &=\Reop\!\left[
      \left(\sphi \psi_h^{\,n+1}\right)_{j}^{*}(g^{n}_{j})^{*}
      g_{j}^{n}D_e^n(\sphi \psi_h^{\,n+1})
      \right]
      \nn \\
    & =J_e^{\,n+1/2}.\label{eq:local-supercurrent-covariance}
  \end{align}
  Combining \eqref{eq:local-electric-covariance}, \eqref{eq:local-curl-covariance}, and \eqref{eq:local-supercurrent-covariance} establishes the invariance of \eqref{eq:local-vector-potential-equation} for any test
  function $\bv_h\in X_h$. Finally,
  \begin{align}
    \mathcal{G}_{h}(\widetilde{\psi}^{n}_{h}, \widetilde{\bA}^{n}_{h})
    &:= \frac{1}{2}\sum_{e\in\mathcal E_h} \mu_e |\widetilde{D}_e^n \widetilde{\psi}^n_h|^{2}
      + \frac{1}{4}\sum_{\bm{x}_i \in \mathcal{N}_h} m_i \left(|\widetilde{\psi}_i^n|^2 - 1\right)^2
      + \frac{1}{2}\|\Curl\widetilde{\bA}_h^n - H\|_{L^2(\Omega)}^2
      \nn\\
    &= \frac{1}{2}\sum_{e\in\mathcal E_h} \mu_e |g_{j}^{n}|^{2} |D_e^n \psi^{n}_{h}|^{2}
      + \frac{1}{4}\sum_{\bm{x}_i \in \mathcal{N}_h} m_i \left(|g_{i}^{n}|^{2}|\psi_i^n|^2 - 1\right)^2
      + \frac{1}{2}\|\Curl\bA_h^n - H\|_{L^2(\Omega)}^2\nn\\
    &=\frac{1}{2}\sum_{e\in\mathcal E_h} \mu_e |D_e^n \psi^{n}_h|^{2}
      + \frac{1}{4}\sum_{\bm{x}_i \in \mathcal{N}_h} m_i \left(|\psi_i^n|^2 - 1\right)^2
      + \frac{1}{2}\|\Curl\bA_h^n - H\|_{L^2(\Omega)}^2\nn\\
    &=\mathcal{G}_{h} (\psi^{n}_{h},\bA^{n}_{h})\,.\nn
  \end{align}
  This establishes the desired gauge invariance.
\end{proof}

\subsection{Fully Discrete Scheme in the Temporal Gauge}

Owing to the discrete gauge invariance established above, a particular gauge can
be chosen without loss of generality. For practical computations, we adopt the
most popular temporal gauge
\begin{equation}
  \phi_h^{n+1/2} = 0, \qquad n \ge 0.
  \label{eq:discrete-zero-potential-gauge}
\end{equation}
In this gauge, the auxiliary field reduces to $\sphi \psi_h^{\,n+1} = \psi_h^{n+1}$,
and the edge supercurrent $\bJ_h^{n+1/2}$ becomes
\begin{equation}
  J_e^{n+1/2} = \frac{1}{\kappa} \Imop\!\left[ (\psi_j^{\,n+1})^* U_e^n \psi_i^{\,n+1} \right],
  \qquad \forall e = (i,j) \in \mathcal{E}_h.
\end{equation}
Starting from $(\psi_h^n, \bA_h^n) \in V_h \times X_h$, the scheme determines $\psi_h^{n+1}$
and $\bA_h^{n+1}$ sequentially by solving
\begin{empheq}[left=\empheqlbrace]{align}
  & \eta\Big( \frac{\psi_h^{n+1}-\psi_h^n}{\tau}, \varphi_h \Big)_{0,h} + a_h^n(\psi_h^{n+1},\varphi_h) + \left( (|\psi_h^n|^2-1)\psi_h^n, \varphi_h \right)_{0,h} = 0,
  \label{eq:discrete-tdgl-temporal-psi}\\
  &\Big( \frac{\bA_h^{n+1}-\bA_h^n}{\tau}, \bv_h \Big) + \left( \operatorname{curl}\bA_h^{n+1}, \operatorname{curl}\bv_h \right) + \langle \bJ_h^{n+1/2}, \bv_h \rangle_{\mathcal{E}_h} = \left( H, \operatorname{curl}\bv_h \right),
  \label{eq:discrete-tdgl-temporal-A}
\end{empheq}
for any test functions $\varphi_h\in V_h$ and $\bv_h\in X_h$. The initial conditions for
\eqref{eq:discrete-tdgl-temporal-psi}-\eqref{eq:discrete-tdgl-temporal-A} are given by projecting or interpolating the given continuous initial
data $(\psi_0, \bA_0)$ into the finite element spaces, such that
\begin{equation}
  \psi_h^0 = \mathcal{I}_h \psi_0 \in V_h, \qquad \bA_h^0 = \Pi_h \bA_0 \in X_h\,.
  \label{eq:discrete-initial-conditions}
\end{equation}

\begin{remark}[Well-posedness and decoupling]
  \label{rem:decoupled-solves}
  In the temporal gauge, the fully discrete scheme decouples into two successive
  linear subproblems. The existence and unique solvability of the solution
  $(\psi_h^{n+1}, \bA_h^{n+1})$ follow immediately at each time step, as the
  corresponding coefficient matrices are Hermitian (or symmetric)
  positive-definite.
\end{remark}

\section{Theoretical Analysis}
\label{sec:theoretical-analysis}

In this section, we establish two fundamental theoretical properties of the
proposed scheme: the discrete maximum bound principle (MBP) and energy
stability. For ease of presentation, we first carry out the proofs under the
temporal gauge.

\begin{theorem}[Discrete maximum bound principle]
  \label{thm:discrete-mbp}
  Let \(\{(\psi_h^n,\bA_h^n)\}_{n\ge0}\) be the solution of the discrete system
  \eqref{eq:discrete-tdgl-temporal-psi}--\eqref{eq:discrete-tdgl-temporal-A}. Under the nonnegative edge-weight condition~\eqref{eq:mesh-degree-assume} and the time-step restriction
  \(\tau\le\frac{\eta}{2}\),  if the initial data satisfies \(\|\psi_h^0\|_{L^\infty(\Omega)}\le1\), then
  \begin{equation}
    \|\psi_h^n\|_{L^\infty(\Omega)}\le1,
    \qquad n\ge0.
    \label{eq:mbp-infinity-bound}
  \end{equation}
\end{theorem}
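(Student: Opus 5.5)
Nodal induction argument. Assume $\|\psi_h^n\|_{L^\infty}\le1$; since $\psi_h^n\in V_h$ is piecewise linear, its modulus is maximized at vertices, so it suffices to show $|\psi_i^{n+1}|\le1$ at every node $\bx_i$. The base case is the hypothesis on $\psi_h^0$. The plan is to rewrite \eqref{eq:discrete-tdgl-temporal-psi} as a nodal equation and then apply a discrete maximum principle, using only $\mu_e\ge0$ from \eqref{eq:mesh-degree-assume}.

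First, test \eqref{eq:discrete-tdgl-temporal-psi} with $\varphi_h=\varphi_k$, the nodal basis function at $\bx_k$. The lumped terms are diagonal, giving $m_k\bigl[\frac{\eta}{\tau}(\psi_k^{n+1}-\psi_k^n)+(|\psi_k^n|^2-1)\psi_k^n\bigr]$. For the covariant form, $D_e^n\varphi_k$ is nonzero only on edges incident to $k$. For $e=(k,j)$ we have $D_e\varphi_k=-\frac{\ri}{\kappa}U_e$, and for $e=(j,k)$ we have $D_e\varphi_k=\frac{\ri}{\kappa}$. A short computation, using $|U_e|=1$, gives in both orientations
\[
a_h^n(\psi_h^{n+1},\varphi_k)=\frac{1}{\kappa^2}\sum_{j\sim k}\mu_{kj}\bigl(\psi_k^{n+1}-U_{kj}\psi_j^{n+1}\bigr),
\]
where $U_{kj}$ is the link transporting from $j$ to $k$ and has modulus one. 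This is a magnetic graph Laplacian. The nodal equation therefore reads
\[
\Bigl(\frac{\eta m_k}{\tau}+\frac{1}{\kappa^2}\sum_{j\sim k}\mu_{kj}\Bigr)\psi_k^{n+1}-\frac{1}{\kappa^2}\sum_{j\sim k}\mu_{kj}U_{kj}\psi_j^{n+1}
= m_k\Bigl(\frac{\eta}{\tau}+1-|\psi_k^n|^2\Bigr)\psi_k^n .
\]

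Second, apply the maximum principle. Choose $k$ with $|\psi_k^{n+1}|=\max_i|\psi_i^{n+1}|=:M$. Take moduli, move the off-diagonal sum to the right, and use $\mu_{kj}\ge0$ and $|U_{kj}\psi_j^{n+1}|\le M$. The neighbor terms then cancel against the diagonal edge terms, leaving
\[
\frac{\eta m_k}{\tau}M\le m_k\bigl|\tfrac{\eta}{\tau}+1-|\psi_k^n|^2\bigr|\,|\psi_k^n|.
\]
Set $s=|\psi_k^n|\in[0,1]$. When $\tau\le\eta$, the coefficient $\frac{\eta}{\tau}+1-s^2$ is positive. It remains to show that $f(s)=(\frac{\eta}{\tau}+1)s-s^3\le\frac{\eta}{\tau}$ on $[0,1]$. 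We have $f'(s)=\frac{\eta}{\tau}+1-3s^2$, which is nonnegative on $[0,1]$ once $\frac{\eta}{\tau}\ge2$, i.e.\ $\tau\le\eta/2$. So $f$ is increasing and $f(s)\le f(1)=\frac{\eta}{\tau}$. Hence $M\le1$, which closes the induction.

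The main obstacle is the first step: getting the edge orientation and conjugation conventions right in $a_h^n(\cdot,\varphi_k)$, so that every incident edge contributes a diagonal coefficient $+\mu_e/\kappa^2$ and an off-diagonal coefficient of modulus exactly $\mu_e/\kappa^2$. The argument needs this precise diagonal-versus-off-diagonal balance; everything after it is the elementary monotonicity check of $f$, which is exactly where $\tau\le\eta/2$ is used.
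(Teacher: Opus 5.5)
Your proof is correct, but it takes a different route from the paper's. The paper argues by contradiction with a truncation test function. It sets $\varphi_h=\psi_h^{n+1}$ at nodes where $|\psi_i^{n+1}|>1$ and $\varphi_h=0$ elsewhere. It then shows that the real part of the lumped mass-plus-reaction terms is strictly positive, and checks edge by edge, in four cases according to which endpoints exceed $1$, that $\Reop a_h^n(\psi_h^{n+1},\varphi_h)\ge0$. This contradicts \eqref{eq:discrete-tdgl-temporal-psi}.

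You instead test with a single nodal basis function and apply a pointwise maximum principle at the node where $|\psi_i^{n+1}|$ is largest. This relies on each row of the magnetic graph Laplacian having diagonal $\frac{1}{\kappa^2}\sum_j\mu_{kj}$ and off-diagonal entries of modulus exactly $\mu_{kj}/\kappa^2$. Your orientation bookkeeping is right: $U_{kj}=(U_e^n)^*$ for $e=(k,j)$ and $U_{kj}=U_e^n$ for $e=(j,k)$.

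Both arguments use the same ingredients: a diagonal (lumped) mass matrix, $\mu_e\ge0$, and $|U_e^n|=1$. They also use the same scalar inequality, since $\frac{\eta}{\tau}-f(s)=(1-s)\bigl[\frac{\eta}{\tau}-s(1+s)\bigr]$ is exactly the factorization the paper uses.

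Your version is shorter and avoids both the contradiction and the edge case analysis. It also gives a quantitative one-step bound $\|\psi_h^{n+1}\|_{L^\infty}\le\frac{\tau}{\eta}f(\|\psi_h^n\|_{L^\infty})$, because $f$ is increasing on $[0,1]$. This shows, for instance, that a strict bound $<1$ propagates. The row structure you exhibit is strictly diagonally dominant, which also gives unique solvability of the $\psi$-step directly.

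The paper's truncation argument works at the level of the sesquilinear form, mirrors the continuous proof, and treats all offending nodes at once.

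A minor point: $\frac{\eta}{\tau}+1-s^2>0$ holds for every $\tau>0$ because $s\le1$, so you do not need the restriction $\tau\le\eta$ there.
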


\begin{proof}
  We proceed by induction. By assumption, $\|\psi_h^0\|_{L^\infty(\Omega)} \le 1$. Now, we
  assume inductively that $\|\psi_h^n\|_{L^\infty(\Omega)} \le 1$. We define the cutoff test
  function $\varphi_h \in V_h$ by its nodal values
  \begin{equation}
    \varphi_i =
    \begin{cases}
      \psi_i^{\,n+1}, & \text{if } |\psi_i^{n+1}| > 1, \\
      0,              & \text{if } |\psi_i^{n+1}| \le 1.
    \end{cases}
    \label{eq:mbp-test-function}
  \end{equation}
  Clearly, if $\|\psi_h^{n+1}\|_{L^\infty(\Omega)} \le 1$, then $\varphi_h \equiv 0$. To derive a
  contradiction, suppose that $\varphi_h \not\equiv 0$, which means that there exists at least
  one node $i$ such that $|\psi_i^{\,n+1}| > 1$. Substituting $\varphi_h$ into the
  discrete order-parameter equation \eqref{eq:discrete-tdgl-temporal-psi} yields
  \begin{equation}
    \eta \Big(\frac{\psi_h^{\,n+1} - \psi_h^n}{\tau}, \varphi_h\Big)_{0,h}
    + a_h^n(\psi_h^{\,n+1}, \varphi_h)
    + \bigl((|\psi_h^n|^2 - 1)\psi_h^n, \varphi_h\bigr)_{0,h} = 0.
    \label{eq:mbp-weak-form-temporal}
  \end{equation}

  First of all, we estimate the first and third terms on the left-hand side of
  \eqref{eq:mbp-weak-form-temporal}. Taking their real parts and by \eqref{eq:def-of-lumping-inner}, we obtain
  \begin{align}
    &\Reop\!\Big[
      \eta\Big(\frac{\psi_h^{\,n+1}-\psi_h^n}{\tau},\varphi_h\Big)_{0,h}
      +\bigl((|\psi_h^n|^2-1)\psi_h^n,\varphi_h\bigr)_{0,h} \Big]
      \nn\\
    &\quad=\sum_{|\psi_i^{\,n+1}|>1} m_i
      \left\{
      \frac{\eta}{\tau}\left[|\psi_i^{\,n+1}|^2 - \Reop\!\left(\psi_i^n(\psi_i^{\,n+1})^*\right)\right]
      +(|\psi_i^n|^2-1)\Reop\!\left(\psi_i^n(\psi_i^{\,n+1})^*\right)
      \right\}
      \nn \\
    &\quad = \sum_{|\psi_i^{\,n+1}|>1} m_i \left \{
      \frac{\eta}{\tau}|\psi_i^{\,n+1}|^2
      -\left(\frac{\eta}{\tau}+1-|\psi_i^n|^2\right)\Reop\!\left(\psi_i^n(\psi_i^{\,n+1})^*\right) \right \}.
      \label{eq:mbp-mass-reaction}
  \end{align}
  The bound $|\psi_i^n|\le 1$ yields
  \begin{equation}
    \frac{\eta}{\tau}+1-|\psi_i^n|^2 > 0.
    \label{eq:mbp-positive-coefficient}
  \end{equation}
  Since $\Reop(\psi_i^n(\psi_i^{\,n+1})^*) \le |\psi_i^n|\,|\psi_i^{\,n+1}|$, it follows
  that
  \begin{align}
    &\frac{\eta}{\tau}|\psi_i^{\,n+1}|^2
      -\left(\frac{\eta}{\tau}+1-|\psi_i^n|^2\right)\Reop\!\left(\psi_i^n(\psi_i^{\,n+1})^*\right)\nn \\
    &\quad\ge |\psi_i^{\,n+1}| \left[ \frac{\eta}{\tau}|\psi_i^{\,n+1}| -\left(\frac{\eta}{\tau}+1-|\psi_i^n|^2\right)|\psi_i^n| \right].\label{eq:mbp-mass-reaction-bound}
  \end{align}
  Furthermore, the condition $\tau \le \eta/2$ implies $\eta/\tau \ge 2$. Combined with
  $0\le|\psi_i^n|\le1$, we observe that
  \begin{equation}
    \frac{\eta}{\tau}-\left(\frac{\eta}{\tau}+1-|\psi_i^n|^2\right)|\psi_i^n|
    =(1-|\psi_i^n|)\left[\frac{\eta}{\tau}-|\psi_i^n|(1+|\psi_i^n|)\right] \ge 0.
    \label{eq:mbp-reaction-inequality}
  \end{equation}
  When $|\psi_i^{\,n+1}|>1$, the term inside the right-hand side of \eqref{eq:mbp-mass-reaction-bound} satisfies
  the strict inequality
  \begin{equation}
    \frac{\eta}{\tau}|\psi_i^{\,n+1}| - \left(\frac{\eta}{\tau}+1-|\psi_i^n|^2\right)|\psi_i^n|
    = \frac{\eta}{\tau}\left(|\psi_i^{\,n+1}|-1\right) + \frac{\eta}{\tau} - \left(\frac{\eta}{\tau}+1-|\psi_i^n|^2\right)|\psi_i^n| > 0.
    \label{eq:mbp-strict-reaction-inequality}
  \end{equation}
  Since $\varphi_h \not\equiv 0$, the sum in \eqref{eq:mbp-mass-reaction} is strictly positive,
  which leads to 
  \begin{equation}
    \Reop\!\Big[
      \eta\Big(\frac{\psi_h^{\,n+1}-\psi_h^n}{\tau},\varphi_h\Big)_{0,h}
      +\bigl((|\psi_h^n|^2-1)\psi_h^n,\varphi_h\bigr)_{0,h}
      \Big] > 0.\label{eq:mbp-temporal-reaction-strictly-positive}
  \end{equation}

  Next, we examine the second term on the left-hand side of \eqref{eq:mbp-weak-form-temporal}.
  Recalling \eqref{eq:def-of-covariance-der}, we have
  \begin{equation*}
    \Reop\left[a^{n}_{h}(\psi_{h}^{n+1},\varphi_{h})\right]=  \sum_{e\in\mathcal E_h} \mu_e \Reop\left[D_e^n \psi^{n+1}_h\, (D_e^n \varphi_h)^{*}\right]\,.
  \end{equation*}
  For any oriented edge $e = (i, j) \in \mathcal{E}_h$, we consider the following four cases:
  \begin{itemize}
    \item If both endpoints satisfy $|\psi_i^{\,n+1}|>1$ and $|\psi_j^{\,n+1}|>1$, then
          $D_e^n\varphi_h = D_e^n\psi_h^{\,n+1}$, and
          \begin{equation*}
            \Reop\!\left[D_e^n\psi_h^{\,n+1}\bigl(D_e^n\varphi_h\bigr)^*\right] = |D_e^n\psi_h^{\,n+1}|^2 \ge 0.
          \end{equation*}

    \item If only vertex $i$ satisfies $|\psi_i^{\,n+1}|>1$, then
          $D_e^n\varphi_h = -(\ri/\kappa)U_e^n\psi_i^{\,n+1}$. Recalling that \(|U_e^n| = 1\),
          a direct calculation yields
          \begin{align*}
            \Reop\!\left[D_e^n\psi_h^{\,n+1}\bigl(D_e^n\varphi_h\bigr)^*\right]\nn
            &= \frac{1}{\kappa^2}\left[|U^{n}_{e}|^{2}|\psi_i^{\,n+1}|^2 - \Reop\!\left(\psi_j^{\,n+1}(U_e^n)^*(\psi_i^{\,n+1})^*\right)\right] \\
            &\ge \frac{1}{\kappa^2}|\psi_i^{\,n+1}|\left(|\psi_i^{\,n+1}| - |\psi_j^{\,n+1}|\right) \ge 0\,,
          \end{align*}

    \item If only vertex $j$ satisfies $|\psi_j^{\,n+1}|>1$, then
          $D_e^n\varphi_h = (\ri/\kappa)\psi_j^{\,n+1}$, and similarly
          \begin{align*}
            \Reop\!\left[D_e^n\psi_h^{\,n+1}\bigl(D_e^n\varphi_h\bigr)^*\right]
            &= \frac{1}{\kappa^2}\left[|\psi_j^{\,n+1}|^2 - \Reop\!\left(U_e^n\psi_i^{\,n+1}(\psi_j^{\,n+1})^*\right)\right] \nn\\
            &\ge \frac{1}{\kappa^2}|\psi_j^{\,n+1}|\left(|\psi_j^{\,n+1}| - |\psi_i^{\,n+1}|\right) \ge 0.
          \end{align*}

    \item If $|\psi_i^{\,n+1}|\le1$ and $|\psi_j^{\,n+1}|\le1$, then $D_e^n\varphi_h = 0$, so
          the term vanishes identically.
  \end{itemize}
  Summing over all edges and taking into account the geometric weights
  $\mu_e \ge 0$ guaranteed by \eqref{eq:mesh-degree-assume}, we conclude that
  \begin{equation}
    \Reop\bigl[a_h^n(\psi_h^{\,n+1}, \varphi_h)\bigr] \ge 0.
    \label{eq:mbp-covariant-nonnegative}
  \end{equation}

  Combining \eqref{eq:mbp-temporal-reaction-strictly-positive} and \eqref{eq:mbp-covariant-nonnegative}, the real part of the left-hand side of \eqref{eq:mbp-weak-form-temporal} is strictly
  positive, yielding a contradiction. Therefore, the assumption $\varphi_h \not\equiv 0$
  cannot hold, implying
  \begin{equation}
    |\psi_i^{\,n+1}| \le 1 \qquad \text{for all } i.
    \label{eq:mbp-nodal-bound}
  \end{equation}
  Since $\psi_h^{n+1}$ is piecewise linear, the nodal bound \eqref{eq:mbp-nodal-bound} immediately implies
  that
  \begin{equation*}
    \|\psi_h^{n+1}\|_{L^\infty(\Omega)} \le 1.
  \end{equation*}
  By induction, this completes the proof.
\end{proof}

With the $L^\infty$-bound $\|\psi_h^n\|_{L^\infty(\Omega)} \le 1$ established in Theorem~\ref{thm:discrete-mbp},
we are now ready to state the discrete energy dissipation law for the scheme.

\begin{theorem}[Discrete energy stability]
  \label{thm:energy-stability}
  Let \(\{(\psi_h^n,\bA_h^n)\}_{n\ge0}\) be the solution of the discrete system
  \eqref{eq:discrete-tdgl-temporal-psi}-\eqref{eq:discrete-tdgl-temporal-A}. Under the assumptions of Theorem~\ref{thm:discrete-mbp}, and with the additional
  time-step restriction
  \begin{equation}
    \tau\le \frac{2}{\Lambda^{2}},
    \label{eq:energy-timestep-edge}
  \end{equation}
  where $\Lambda$ is defined in \eqref{eq:def-of-lambda}, there holds
  \begin{equation}
    \mathcal{G}_h^{n+1}(\psi_h^{n+1},\bA_h^{n+1})
    \le
    \mathcal{G}_h^n(\psi_h^n,\bA_h^n),
    \qquad \text{for any } n\ge0.
    \label{eq:energy-stability}
  \end{equation}
\end{theorem}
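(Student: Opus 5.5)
The proof combines two energy identities, one from each subproblem, with a Taylor expansion of the link-variable term. Write $\delta\psi:=\psi_h^{n+1}-\psi_h^n$ and $\delta\bA:=\bA_h^{n+1}-\bA_h^n$. The only real difficulty is that the covariant part of $\mathcal G_h^{n+1}$ uses the links $U_e^{n+1}$, whereas the $\psi$-equation \eqref{eq:discrete-tdgl-temporal-psi} uses $U_e^n$. I therefore split
\[
\tfrac12 a_h^{n+1}(\psi_h^{n+1},\psi_h^{n+1})-\tfrac12 a_h^{n}(\psi_h^{n},\psi_h^{n})
=\Bigl[\tfrac12 a_h^{n+1}(\psi_h^{n+1},\psi_h^{n+1})-\tfrac12 a_h^{n}(\psi_h^{n+1},\psi_h^{n+1})\Bigr]
+\Bigl[\tfrac12 a_h^{n}(\psi_h^{n+1},\psi_h^{n+1})-\tfrac12 a_h^{n}(\psi_h^{n},\psi_h^{n})\Bigr].
\]
The first bracket will be handled by the $\bA$-equation and the second by the $\psi$-equation.

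\emph{Order-parameter step.} Test \eqref{eq:discrete-tdgl-temporal-psi} with $\varphi_h=\delta\psi$ and take real parts. Since $a_h^n$ is Hermitian,
\[
\Reop a_h^n(\psi_h^{n+1},\delta\psi)=\tfrac12 a_h^n(\psi_h^{n+1},\psi_h^{n+1})-\tfrac12 a_h^n(\psi_h^n,\psi_h^n)+\tfrac12 a_h^n(\delta\psi,\delta\psi),
\]
and the last term is nonnegative because $\mu_e\ge0$ by \eqref{eq:mesh-degree-assume}. The potential term is nodal. Set $F(z)=\frac14(|z|^2-1)^2$. By Theorem~\ref{thm:discrete-mbp} we have $|\psi_i^n|,|\psi_i^{n+1}|\le1$, and on the unit disk the real Hessian of $F$ has eigenvalues $3|z|^2-1$ and $|z|^2-1$, hence at most $2$. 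A second-order Taylor expansion along the segment, which stays in the disk, then gives
\[
F(\psi_i^{n+1})-F(\psi_i^n)\le (|\psi_i^n|^2-1)\Reop\bigl(\psi_i^n(\delta\psi_i)^*\bigr)+|\delta\psi_i|^2 .
\]
Summing these with weights $m_i$ yields
\[
\tfrac12 a_h^n(\psi_h^{n+1},\psi_h^{n+1})-\tfrac12 a_h^n(\psi_h^n,\psi_h^n)+\tfrac14\bigl\||\psi_h^{n+1}|^2-1\bigr\|_{0,h}^2-\tfrac14\bigl\||\psi_h^{n}|^2-1\bigr\|_{0,h}^2\le -\Bigl(\tfrac{\eta}{\tau}-1\Bigr)\|\delta\psi\|_{0,h}^2 ,
\]
where $\|\cdot\|_{0,h}$ is the lumped norm. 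The right-hand side is $\le0$ because $\tau\le\eta/2$.

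\emph{Vector-potential step and link term (main obstacle).} Test \eqref{eq:discrete-tdgl-temporal-A} with $\bv_h=\delta\bA$. Using the identity $(a,a-b)=\frac12|a|^2-\frac12|b|^2+\frac12|a-b|^2$ on the curl term gives
\[
\tfrac1\tau\|\delta\bA\|^2+\tfrac12\|\Curl\bA_h^{n+1}-H\|^2-\tfrac12\|\Curl\bA_h^{n}-H\|^2+\tfrac12\|\Curl\delta\bA\|^2+\langle\bJ_h^{n+1/2},\delta\bA\rangle_{\mathcal E_h}=0 .
\]
For the first bracket above, fix an edge $e=(i,j)$ and write $(\psi_j^{n+1})^*\psi_i^{n+1}=re^{\ri\theta}$ with $r\le1$. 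Then
\[
|D_e\psi_h^{n+1}|^2=\kappa^{-2}\bigl(|\psi_i^{n+1}|^2+|\psi_j^{n+1}|^2\bigr)+f(A_e),\qquad f(A):=-\tfrac{2r}{\kappa^2}\cos(\theta+\kappa A).
\]
One checks $f'(A_e^n)=\tfrac{2}{\kappa}\Imop\bigl[(\psi_j^{n+1})^*U_e^n\psi_i^{n+1}\bigr]=2J_e^{n+1/2}$. This is exactly why $\bJ$ is evaluated with $\psi^{n+1}$ and $U^n$. Moreover $|f''|\le 2r\le2$. Taylor expansion at $A_e^n$ and summation with $\mu_e\ge0$ give
\[
\tfrac12 a_h^{n+1}(\psi_h^{n+1},\psi_h^{n+1})-\tfrac12 a_h^{n}(\psi_h^{n+1},\psi_h^{n+1})\le\langle\bJ_h^{n+1/2},\delta\bA\rangle_{\mathcal E_h}+\tfrac12|\delta\bA|_{\mathcal E_h}^2\le\langle\bJ_h^{n+1/2},\delta\bA\rangle_{\mathcal E_h}+\tfrac{\Lambda^2}{2}\|\delta\bA\|^2 ,
\]
where the last step uses Lemma~\ref{lem:edge-seminorm-bound}.

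\emph{Conclusion.} Adding the three relations, the current terms $\langle\bJ_h^{n+1/2},\delta\bA\rangle_{\mathcal E_h}$ cancel, and we obtain
\[
\mathcal G_h^{n+1}-\mathcal G_h^n\le-\Bigl(\tfrac\eta\tau-1\Bigr)\|\delta\psi\|_{0,h}^2-\tfrac12 a_h^n(\delta\psi,\delta\psi)-\Bigl(\tfrac1\tau-\tfrac{\Lambda^2}{2}\Bigr)\|\delta\bA\|^2-\tfrac12\|\Curl\delta\bA\|^2 .
\]
Every term on the right is $\le0$ under $\tau\le\eta/2$ and \eqref{eq:energy-timestep-edge}, which proves \eqref{eq:energy-stability}. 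The step that needs the most care is the link-term expansion: the sign of $f'$ must match the definition of $J_e$ in \eqref{eq:discrete-current-def}, and the bound $r\le1$ must come from Theorem~\ref{thm:discrete-mbp}.
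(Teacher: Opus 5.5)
Your proposal is correct and follows essentially the same route as the paper. It uses the same test functions $\psi_h^{n+1}-\psi_h^n$ and $\bA_h^{n+1}-\bA_h^n$, and the same splitting of $a_h^{n+1}-a_h^n$ so that the first-order part of the link term cancels $\langle \bJ_h^{n+1/2},\bA_h^{n+1}-\bA_h^n\rangle_{\mathcal E_h}$; it ends at the same dissipation inequality with coefficients $\eta/\tau-1$ and $1/\tau-\Lambda^2/2$. The differences are only technical: you bound the quartic potential with a Hessian estimate on the unit disk and the link term with a real Taylor expansion of the cosine. The paper instead uses an exact algebraic identity for $I_1+I_2$ and the bound $|1-e^{-\ri x}-\ri x|\le x^2/2$ for $I_3+I_4$, and both yield identical constants.
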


\begin{proof}
  Substituting $\varphi_{h}=\psi_h^{n+1}-\psi_h^n$ and $\bv_{h}=\bA_h^{n+1}-\bA_h^n$ into
  \eqref{eq:discrete-tdgl-temporal-psi}-\eqref{eq:discrete-tdgl-temporal-A}, we obtain
  \begin{align}
    & 0 = \frac{\eta}{\tau}\|\psi_h^{n+1}-\psi_h^n\|_{0,h}^2
      +\Reop\left[a_h^n(\psi_h^{n+1},\psi_h^{n+1}-\psi_h^n)\right]+\Reop\left[\bigl((|\psi_h^n|^2-1)\psi_h^n,\psi_h^{n+1}-\psi_h^n\bigr)_{0,h}\right]
      \nn\\
    &+\frac{1}{\tau}\|\bA_h^{n+1}-\bA_h^n\|_{L^2}^2 + \left(\Curl\bA_h^{n+1}-H,\Curl(\bA_h^{n+1}-\bA_h^n)\right) + \langle \bJ_{h}^{n+1/2},\bA_h^{n+1}-\bA_h^n\rangle_{\mathcal E_h}.
      \label{eq:energy-tested-equations}
  \end{align}
  It is easy to see that
  \begin{align} &\Reop\left(a_h^n(\psi_h^{n+1},\psi_h^{n+1}-\psi_h^n)\right)
                  \notag\\
                &= \frac12a_h^n(\psi_h^{n+1},\psi_h^{n+1}) -\frac12a_h^n(\psi_h^n,\psi_h^n) +\frac12a_h^n(\psi_h^{n+1}-\psi_h^n,\psi_h^{n+1}-\psi_h^n)
                  \label{eq:energy-covariant-polarization}
  \end{align}
  and
  \begin{align}
    &(\Curl\bA_h^{n+1}-H,\Curl(\bA_h^{n+1}-\bA_h^n))_{L^2}
      \notag\\
    &= \frac12\|\Curl\bA_h^{n+1}-H\|_{L^2}^2
      -\frac12\|\Curl\bA_h^n-H\|_{L^2}^2
      +\frac12\|\Curl(\bA_h^{n+1}-\bA_h^n)\|_{L^2}^2.
      \label{eq:energy-magnetic-polarization}
  \end{align}
  Substituting \eqref{eq:energy-covariant-polarization} and \eqref{eq:energy-magnetic-polarization}
  into \eqref{eq:energy-tested-equations} and collecting terms according to the
  discrete energy \eqref{eq:local-discrete-energy}, we obtain
  \begin{align}
    &\mathcal{G}_h^{n+1}-\mathcal{G}_h^n
      +\frac{\eta}{\tau}\|\psi_h^{n+1}-\psi_h^n\|_{0,h}^2
      +\frac12 a_h^n(\psi_h^{n+1}-\psi_h^n,\psi_h^{n+1}-\psi_h^n)
      +\frac1\tau\|\bA_h^{n+1}-\bA_h^n\|_{L^2}^2
      \notag\\
    &\qquad 
      +\frac12\|\Curl(\bA_h^{n+1}-\bA_h^n)\|_{L^2}^2
      \notag\\
    &=\frac14 \sum_{\bm{x}_i \in \mathcal{N}_h} m_i\left[(|\psi_i^{n+1}|^2-1)^2-(|\psi_i^n|^2-1)^2\right] - \Reop\left[\left((|\psi_h^n|^2-1)\psi_h^n,\psi_h^{n+1}-\psi_h^n\right)_{0,h}\right]
    \nn \\
    & \quad +\frac{1}{2}\bigl( a^{n+1}_h(\psi_{h}^{n+1},\psi_{h}^{n+1})
      -a^{n}_h(\psi_{h}^{n+1},\psi_{h}^{n+1})\bigr)-\langle \bJ_{h}^{n+1/2},\bA_{h}^{n+1}-\bA_{h}^{n}\rangle_{\mathcal E_h}\,.
    \nn \\
    & := I_1 + I_2 + I_3 + I_4.      \label{eq:energy-exact-identity}
  \end{align}

  For the first term $I_1$, expanding the quartic difference yields
  \begin{align*}
    \frac14\left[(|\psi_i^{n+1}|^2-1)^2-(|\psi_i^n|^2-1)^2\right]
    &= \frac12(|\psi_i^n|^2-1)\left(|\psi_i^{n+1}|^2-|\psi_i^n|^2\right) \\
    &\quad + \frac14\left(|\psi_i^{n+1}|^2-|\psi_i^n|^2\right)^2.
  \end{align*}
  Combining this with the identity
  \begin{equation*}
    |\psi_i^{n+1}|^2 - |\psi_i^n|^2 - 2\Reop\!\left[\psi_i^n(\psi_i^{n+1}-\psi_i^n)^*\right]
    = |\psi_i^{n+1}-\psi_i^n|^2,
  \end{equation*}
  we obtain
  \begin{align}
    &\frac14\left[(|\psi_i^{n+1}|^2-1)^2-(|\psi_i^n|^2-1)^2\right]
      -\Reop\!\left[(|\psi_i^n|^2-1)\psi_i^n(\psi_i^{n+1}-\psi_i^n)^*\right] \notag\\
    &\quad=
      \frac12(|\psi_i^n|^2-1)|\psi_i^{n+1}-\psi_i^n|^2
      +\frac14\left(|\psi_i^{n+1}|^2-|\psi_i^n|^2\right)^2.
      \label{eq:energy-potential-identity}
  \end{align}
  By Theorem~\ref{thm:discrete-mbp}, the first term on the right-hand side of \eqref{eq:energy-potential-identity} is non-positive
  since $|\psi_i^n| \le 1$. Meanwhile, the second term satisfies
  \begin{equation}
    \left||\psi_i^{n+1}|^2-|\psi_i^n|^2\right|
    \le (|\psi_i^{n+1}|+|\psi_i^n|)\left||\psi_i^{n+1}|-|\psi_i^n|\right|
    \le 2|\psi_i^{n+1}-\psi_i^n|.
  \end{equation}
  Hence
  \begin{align}
    &I_1 + I_2 \le \|\psi_h^{n+1}-\psi_h^n\|_{0,h}^2.
      \label{eq:energy-potential-bound}
  \end{align}

  It remains to estimate the edge-related terms in \eqref{eq:energy-exact-identity}. Recalling the
  definitions of the discrete form $a_h^n(\cdot,\cdot)$ and the discrete supercurrent
  $\bJ_h^{n+1/2}$, we first express both terms as sums over the oriented edges
  $e=(i,j)\in\mathcal{E}_h$:
  \begin{equation}
    I_{3}= \frac{1}{2\kappa^2}\sum_{e\in\mathcal{E}_h}\mu_e \left[\left|\psi_j^{n+1}-\exp(\ri\kappa A_e^{n+1})\psi_i^{n+1}\right|^2 -\left|\psi_j^{n+1}-\exp(\ri\kappa A_e^n)\psi_i^{n+1}\right|^2\right],\label{eq:bilinear-express-edge}
  \end{equation}
  and
  \begin{equation}
     I_{4} = -\sum_{e\in\mathcal{E}_h}\mu_{e} J_e^{n+1/2} (A_e^{n+1}-A_e^n) = -\frac{1}{\kappa}\sum_{e\in\mathcal{E}_h} \mu_e (A_e^{n+1}-A_e^n) \Imop\!\left[(\psi_j^{n+1})^* U^{n}_{e} \psi_i^{n+1}\right].\label{eq:supercurrent-express-edge}
  \end{equation}

  For each fixed edge $e=(i,j)\in\mathcal{E}_h$, expanding the summand in \eqref{eq:bilinear-express-edge}
  gives
  \begin{align}
    &\left|\psi_j^{n+1}-\exp(\ri\kappa A_e^{n+1})\psi_i^{n+1}\right|^2 -\left|\psi_j^{n+1}-\exp(\ri\kappa A_e^n)\psi_i^{n+1}\right|^2\nn \\
    &\quad= -2\Reop\!\left\{ (\psi_j^{n+1})^* \left[\exp(\ri\kappa A_e^{n+1}) - \exp(\ri\kappa A_e^n)\right] \psi_i^{n+1} \right\}\nn \\
    &\quad= -2\Reop\!\left\{ (\psi_j^{n+1})^* U_e^n \psi_i^{n+1} \left[ \exp\!\big(\ri\kappa(A_e^{n+1}-A_e^n)\big) - 1 \right] \right\}.\label{eq:middle-summand-bilinear}
  \end{align}
  Using Euler's formula $\exp(\ri\theta) =\cos\theta + \ri \sin\theta$, the right side of~\eqref{eq:middle-summand-bilinear}
  becomes
  \begin{equation*}
    2\left[1-\cos\big(\kappa(A_e^{n+1}-A_e^n)\big)\right]\Reop\!\left[(\psi_j^{n+1})^* U_e^n \psi_i^{n+1}\right] + 2\sin\big(\kappa(A_e^{n+1}-A_e^n)\big)\Imop\!\left[(\psi_j^{n+1})^* U_e^n \psi_i^{n+1}\right].
  \end{equation*}
  Summing over all edges $e\in\mathcal{E}_h$, we obtain
  \begin{align}
    \quad I_{3}+I_{4}
    &= \frac{1}{\kappa^2}\sum_{e\in\mathcal{E}_h}\mu_e\biggl\{\left[1-\cos\big(\kappa(A_e^{n+1}-A_e^n)\big)\right]\Reop\!\left[(\psi_j^{n+1})^* U_e^n \psi_i^{n+1}\right] \nn\\
    &\mspace{56mu}  +  \left[\sin(\kappa(A_e^{n+1}-A_e^n)) - \kappa(A_e^{n+1}-A_e^n)\right]\Imop\!\left[(\psi_j^{n+1})^* U_e^n \psi_i^{n+1}\right]\biggr\}\nn\\
    &=\frac{1}{\kappa^2}\sum_{e\in\mathcal{E}_h}\mu_e \Reop\biggl\{\left( 1-\ri \kappa (A^{n+1}_{e}-A^{n}_{e})-\exp\left(-\ri\kappa (A^{n+1}_{e}-A^{n}_{e})\right) \right)\left[(\psi_j^{n+1})^* U_e^n \psi_i^{n+1}\right]^{*}\biggr\}\nn\\
    &\leq\frac{1}{2}\sum_{e\in\mathcal{E}_h}\mu_e |A^{n+1}_{e}-A^{n}_{e}|^{2}\left|(\psi_j^{n+1})^* U_e^n \psi_i^{n+1}\right| \,.\label{eq:energy-edge-cancellation}
  \end{align}
 where we have used the Cauchy--Schwarz inequality and the inequality
\(|1-\exp(-\mathrm{i}x)-\mathrm{i}x| \leq x^{2}/2\).
By Theorem~\ref{thm:discrete-mbp}, it follows that
\begin{equation}
    \left|U^{n}_{e}(\psi_j^{n+1})^*\psi_i^{n+1}\right|
    =|\psi_j^{n+1}||\psi_i^{n+1}|\le1\,.\label{eq:ineq-mid-psi}
  \end{equation}
  Combining \eqref{eq:energy-edge-cancellation}-\eqref{eq:ineq-mid-psi} gives
  \begin{equation}
   I_{3} + I_{4} \le\frac12\sum_{e\in\mathcal{E}_{h}}\mu_e|A_e^{n+1}-A_e^n|^2\le \frac{\Lam^{2}}{2} \|\bA_h^{n+1}-\bA_h^n\|_{L^2(\Omega)}^2\,.
      \label{eq:energy-link-bound}
  \end{equation}
  where the last step follows from \eqref{eq:upper-bound-edge-norms}-\eqref{eq:def-of-lambda}.

  Substituting \eqref{eq:energy-potential-bound} and \eqref{eq:energy-link-bound}
  into \eqref{eq:energy-exact-identity} yields the dissipation inequality
  \begin{align}
    &\mathcal{G}_h^{n+1}-\mathcal{G}_h^n
      +\left(\frac{\eta}{\tau}-1\right)\|\psi_h^{n+1}-\psi_h^n\|_{0,h}^2
      +\frac12 a_h^n(\psi_h^{n+1}-\psi_h^n,\psi_h^{n+1}-\psi_h^n)
      \nn\\
    &\quad+\big(\frac1\tau-\frac{\Lambda^{2}}{2}\big)
      \|\bA_h^{n+1}-\bA_h^n\|_{L^2(\Omega)}^2 + \frac12\|\Curl(\bA_h^{n+1}-\bA_h^n)\|_{L^2(\Omega)}^2\le0.
      \label{eq:energy-dissipation}
  \end{align}
  By noting that $\tau \le \frac{\eta}{2}$ and \eqref{eq:energy-timestep-edge}, there hold
  \begin{equation*}
    \frac{\eta}{\tau}-1 \ge 1 \quad\text{and}\quad \frac{1}{\tau} -\frac{\Lambda^{2}}{2}\ge 0\,.
  \end{equation*}
  Moreover $\mu_e\ge0$ gives
  \begin{equation*}
    a_h^n(\psi_h^{n+1}-\psi_h^n,\psi_h^{n+1}-\psi_h^n)\ge0,
  \end{equation*}
  Consequently, we obtain the desired energy dissipation property
  \begin{equation*}
    \mathcal{G}_h^{n+1}(\psi_h^{n+1},\bA_h^{n+1})
    -\mathcal{G}_h^n(\psi_h^n,\bA_h^n) \le 0.
  \end{equation*}
\end{proof}

\begin{remark}
  Lemma~\ref{lem:local-discrete-gauge-covariance} establishes the discrete gauge-invariant property.
  It follows that any solution in an arbitrary gauge can be transformed into a temporal-gauge
  solution by a discrete gauge transformation preserving both $|\psi_i^n|$ and $\mathcal{G}_h^n$.
  Consequently, the conclusions of Theorems~\ref{thm:discrete-mbp} and~\ref{thm:energy-stability} are
  not restricted to the temporal gauge, and both the discrete maximum bound
  principle and energy stability hold under any admissible gauge.
\end{remark}

\section{Numerical experiments}
\label{sec:numerical-experiments}

In this section, we present several numerical examples to demonstrate the
accuracy and the structure-preserving property of the proposed scheme. All
computations are carried out with the finite element library MFEM~\cite{Anderson2021}, and the
meshes are generated by GMSH~\cite{Geuzaine2009a}. All meshes used in the numerical experiments
satisfy the nonnegative edge-weight condition~\eqref{eq:mesh-degree-assume}. For simplicity, we take $\eta=1$ in
all the examples below.

\subsection{Convergence test on a manufactured solution}
\label{subsec:mms-convergence}

\begin{example}[An artificial example]
  \label{ex:mms-convergence}
  We consider an artificial problem on $\Omega=[0,1]^{2}$, defined by
  \begin{empheq}[left=\empheqlbrace]{align}
    &\partial_t\psi+\left(\dfrac{\ri}{\kappa}\nabla+\bA\right)^2\psi
    +\bigl(|\psi|^2-1\bigr)\psi=f,\label{eq:mms-sources-psi}\\
    &\partial_t\bA+\Curl\!\bigl(\Curl\bA\bigr)+\Reop\!\left[\psi^*\left(\dfrac{\ri}{\kappa}\nabla+\bA\right)\psi\right] = \Curl H + \mathbf{g},\label{eq:mms-sources-A}
  \end{empheq}
  with the boundary and the initial conditions
  \begin{align*}
    \left(\dfrac{\ri}{\kappa}\nabla+\bA\right)\psi\cdot\bn=0,
    \qquad
    &(\Curl\bA) \times \bn=H \times \bn,
      \qquad
      \bA\cdot\bn=0
      \quad\text{on }\partial\Omega,\\
    &\psi(\cdot,0)=\psi_0,
      \qquad
      \bA(\cdot,0)=\bA_0 \quad \text{at~}t=0\,.
  \end{align*}
  where we take $\kappa=1$. The functions $f$, $\mathbf g$, $\psi_{0}$ and $\bA_{0}$ are
  chosen correspondingly to the smooth exact solution
  \begin{equation*}
    \psi(x,y,t)=e^{-t}\bigl(\cos(2\pi x)+\ri\cos(\pi y)\bigr) ,\quad\bA(x,y,t)=\begin{bmatrix}
      e^{t-y}\sin(\pi x)\\
      e^{t-x}\sin(2\pi y)
    \end{bmatrix}
  \end{equation*}
  and
  \begin{equation*}
    H(x,y,t):=\Curl\bA=-e^{t-x}\sin(2\pi y)+e^{t-y}\sin(\pi x).
  \end{equation*}
  We set the terminal time $T=1$ in this example.

  We employ uniform meshes with mesh size \(h\) and solve the discrete system
  using the proposed scheme. To test the convergence, we set the time
  step \(\tau=h^2\) and take \(h=\sqrt{2}/8,\sqrt{2}/16,\sqrt{2}/32,\sqrt{2}/64\).
  Table~\ref{tab:mms-convergence} reports the $L^{2}$-norm errors of \(\psi_h\), \(\bA_h\), and
  \(\Curl\bA_h\), together with the convergence rates. The numerical results
  indicate that the errors in \(\bA_h\) and \(\Curl\bA_h\) are of order
  \(O(h)\), whereas the error in \(\psi_h\) is of order \(O(h^2)\).
  \begin{table}[htb]
    \centering
    \caption{Numerical errors and convergence rates for Example~\ref{ex:mms-convergence}.}\label{tab:mms-convergence}
    \begin{tabular}{c|cc|cc|cc}
      \toprule
      $h$
      & $\|\psi_h^n-\psi\|_{L^2(\Omega)}$ & Rate
      & $\|\bA_h^n-\bA\|_{L^2(\Omega)}$ & Rate
      & $\|\Curl\bA_h^n-\Curl\bA\|_{L^2(\Omega)}$ & Rate\\
      \midrule
      $\sqrt{2}/8$  & $5.885\times10^{-2}$ & ---  & $3.340\times10^{-1}$ & ---  & $2.664\times10^{-1}$ & ---\\
      $\sqrt{2}/16$ & $1.447\times10^{-2}$ & $2.02$ & $1.642\times10^{-1}$ & $1.02$ & $1.338\times10^{-1}$ & $0.99$\\
      $\sqrt{2}/32$ & $3.599\times10^{-3}$ & $2.01$ & $8.178\times10^{-2}$ & $1.01$ & $6.697\times10^{-2}$ & $1.00$\\
      $\sqrt{2}/64$ & $8.981\times10^{-4}$ & $2.00$ & $4.086\times10^{-2}$ & $1.00$ & $3.349\times10^{-2}$ & $1.00$\\
      \bottomrule
    \end{tabular}
  \end{table}
\end{example}

\subsection{Numerical verification of energy dissipation and discrete maximum bound}
\label{subsec:energy-mbp}

\begin{example}[An L-shaped domain]
  \label{ex:lshape-unit}
  We consider the temporal-gauge TDGL equations on the L-shaped domain
  \[
    \Omega=(-0.5,0.5)^2\setminus\bigl([0,0.5]\times[-0.5,0]\bigr),
  \]
  whose re-entrant corner is located at the origin. The discrete system is
  solved by the proposed scheme \eqref{eq:discrete-tdgl-temporal-psi}--\eqref{eq:discrete-tdgl-temporal-A} on uniform meshes. The mesh is
  generated by splitting every retained unit square of an \(M\times M\) grid with
  \(M=32\). The physical parameters are
  \[
    \kappa=10,\quad \eta=1,\quad H=5,\quad \psi_h^0=0.6+0.8\,\ri, \quad \bA_h^0=\boldsymbol0,\quad \tau=0.1,\quad T=50.
  \]
  The \(L^\infty\)-norm of $\psi^{n}_{h}$ and the discrete energy \eqref{eq:local-discrete-energy}
  are recorded at all \(501\) time levels. Representative values are listed in
  Table~\ref{tab:lshape-energy-mbp}. The discrete energy satisfies the dissipation property at each time
  step, and the maximum bound of $\psi_{h}^{n}$ remains bounded by one for all
  \(n\). This is consistent with the discrete maximum bound in Theorem~\ref{thm:discrete-mbp} and
  the energy stability in Theorem~\ref{thm:energy-stability}. The two diagnostics are plotted over all
  time steps in Figure~\ref{fig:lshape-diagnostics}, and snapshots of \(|\psi_h|\) and \(\Curl\bA_h\) are
  given in Figures~\ref{fig:lshape-psi} and~\ref{fig:lshape-curl}, respectively. The vortex pattern agrees with
  previous computations in which equation (\ref{eq:tdgl-zero-phi:A}) for $\bA$ was directly
  approximated by N\'ed\'elec elements (see \cite{Duan2022,Hong2023a,Ma2023a}). Hence, the use of N\'ed\'elec
  elements for the temporal gauge TDGL is widely considered ``{\it safe}''.

  \begin{table}[htbp]
    \centering \small \setlength{\tabcolsep}{5pt}
    \caption{Discrete energy \eqref{eq:local-discrete-energy} and $L^\infty$-norm of the order parameter on the
      unit L-shaped domain with $\kappa=10$, $H=5$, and $\tau=0.1$.}
    \label{tab:lshape-energy-mbp}
    \begin{tabular}{l|ccccccc}
      \toprule
      $t$ & $0$ & $1$ & $2$ & $5$ & $10$ & $20$ & $50$\\
      \midrule
      $\mathcal G_h^n$ & $9.3750000$ & $0.1177322$ & $0.1136975$ & $0.1092171$ & $0.1022367$ & $0.0993610$ & $0.0992546$\\
      $\|\psi_h^n\|_{L^\infty(\Omega)}$ & $1.0000000$ & $0.9507678$ & $0.9165344$ & $0.9229063$ & $0.9290705$ & $0.9282978$ & $0.9275137$\\
      \bottomrule
    \end{tabular}
  \end{table}

\begin{figure}[htbp]
  \centering
  \begin{minipage}[t]{0.35\linewidth}
    \centering
    \includegraphics[width=\linewidth]{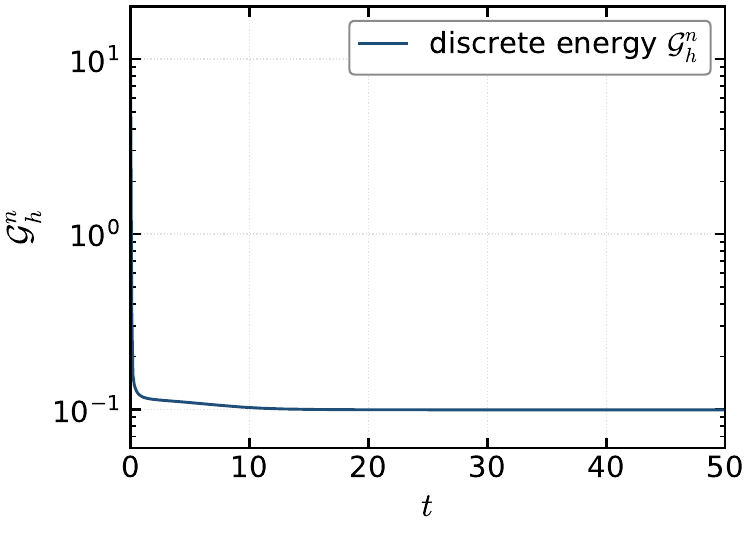}
  \end{minipage}\qquad
  \begin{minipage}[t]{0.35\linewidth}
    \centering
    \includegraphics[width=\linewidth]{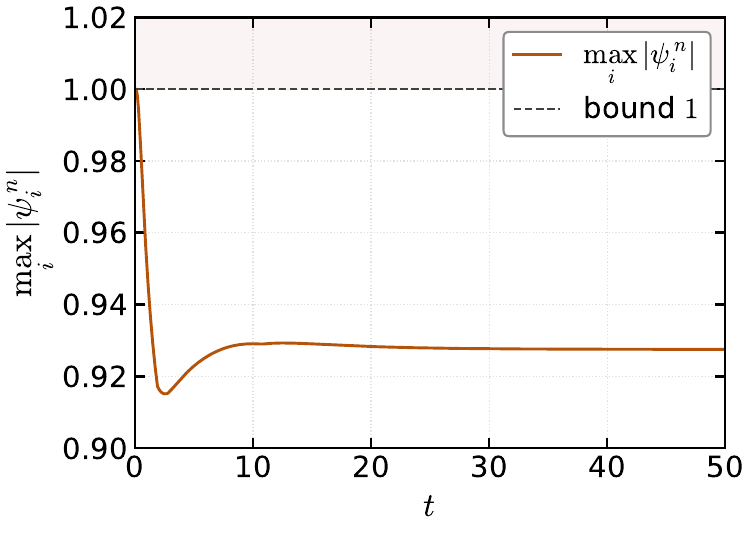}
  \end{minipage}
  \caption{Unit L-shaped domain ($\kappa=10$, $H=5$, $\tau=0.1$). Left: discrete energy
    $\mathcal G_h^n$ over $501$ time levels. Right: $\|\psi_h^n\|_{L^\infty(\Omega)}$ at the
    same time levels.}
  \label{fig:lshape-diagnostics}
\end{figure}

\begin{figure}[htbp]
  \centering
  \includegraphics[width=0.98\linewidth]{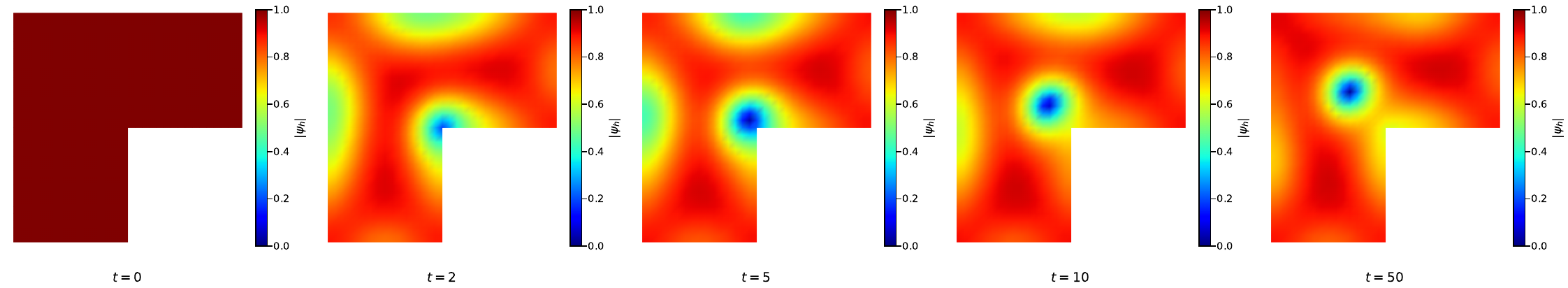}
  \caption{Snapshots of $|\psi_h|$ on the unit L-shaped domain with $\kappa=10$, $H=5$,
    and $\tau=0.1$, at $t=0,2,5,10,50$.}
  \label{fig:lshape-psi}
\end{figure}
\begin{figure}[htbp]
  \centering
  \includegraphics[width=0.98\linewidth]{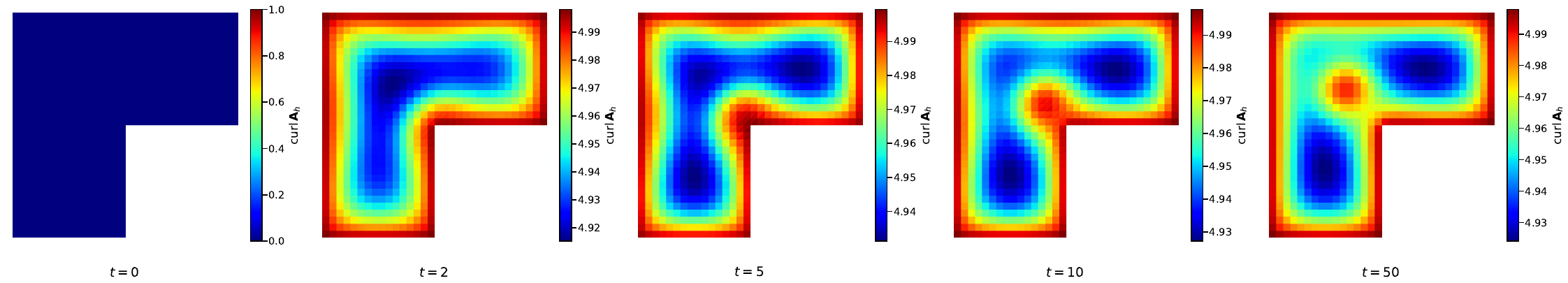}
  \caption{Snapshots of $\Curl\bA_h$ on the unit L-shaped domain with $\kappa=10$,
    $H=5$, and $\tau=0.1$, at $t=0,2,5,10,50$.}
  \label{fig:lshape-curl}
\end{figure}

\end{example}

\begin{example}[Four vortex lines in a unit cube]
  \label{ex:cube-3d}
  As a three-dimensional test we solve the temporal-gauge TDGL equations on the
  unit cube $\Omega=[0,1]^3$ with physical parameters
  \[
    \kappa=10,\quad \eta=1,\quad H=(0,0,3.5),\quad \psi_h^0=0.8+0.6\,\ri,\quad \bA_h^0=\boldsymbol0,\quad \tau=0.1,\quad T=100.
  \]
  The computational domain is discretized by a uniform tetrahedral mesh with
  $9261$ vertices and $48000$ tetrahedra.

  Figure~\ref{fig:cube-diagnostics} shows that, on the tetrahedral mesh, the discrete energy decays
  monotonically and the pointwise bound $\|\psi_h^n\|_{L^\infty(\Omega)}\le 1$ is satisfied at
  every time step; thus the theoretical guarantees of Theorems~\ref{thm:discrete-mbp} and \ref{thm:energy-stability} carry
  over to three dimensions. Figure~\ref{fig:cube-vortices} displays the $|\psi_h|=0.3$ isosurface at
  $t=100$, revealing four vortex lines aligned with the applied field.

\begin{figure}[htbp]
  \centering
  \begin{minipage}[t]{0.35\linewidth}
    \centering
    \includegraphics[width=\linewidth]{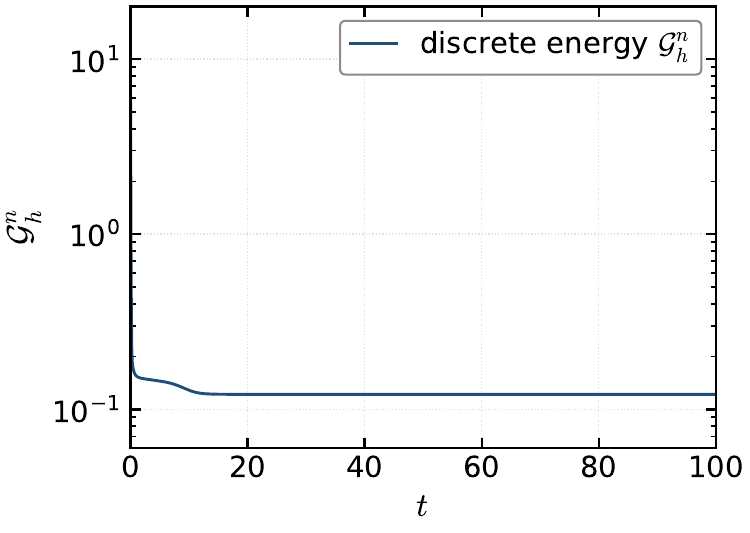}\\[-2pt]
  \end{minipage}\qquad
  \begin{minipage}[t]{0.35\linewidth}
    \centering
    \includegraphics[width=\linewidth]{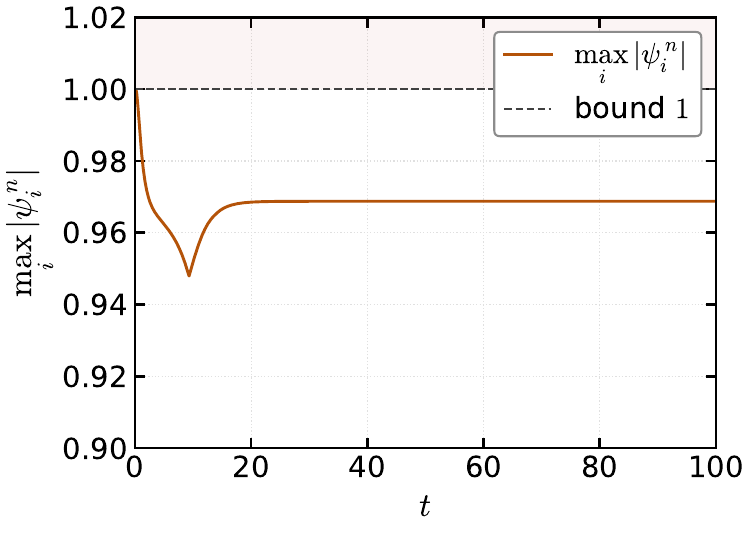}\\[-2pt]
  \end{minipage}
  \caption{Unit cube ($\kappa=10$, $H=(0,0,3.5)$, $\tau=0.1$). Left: discrete energy
    $\mathcal G_h^n$ over $1001$ time levels. Right: $\|\psi_h^n\|_{L^\infty(\Omega)}$ at the
    same time levels.} \label{fig:cube-diagnostics}
\end{figure}

\begin{figure}[htbp]
  \centering
  \begin{minipage}[t]{0.35\linewidth}
    \centering
    \includegraphics[width=\linewidth]{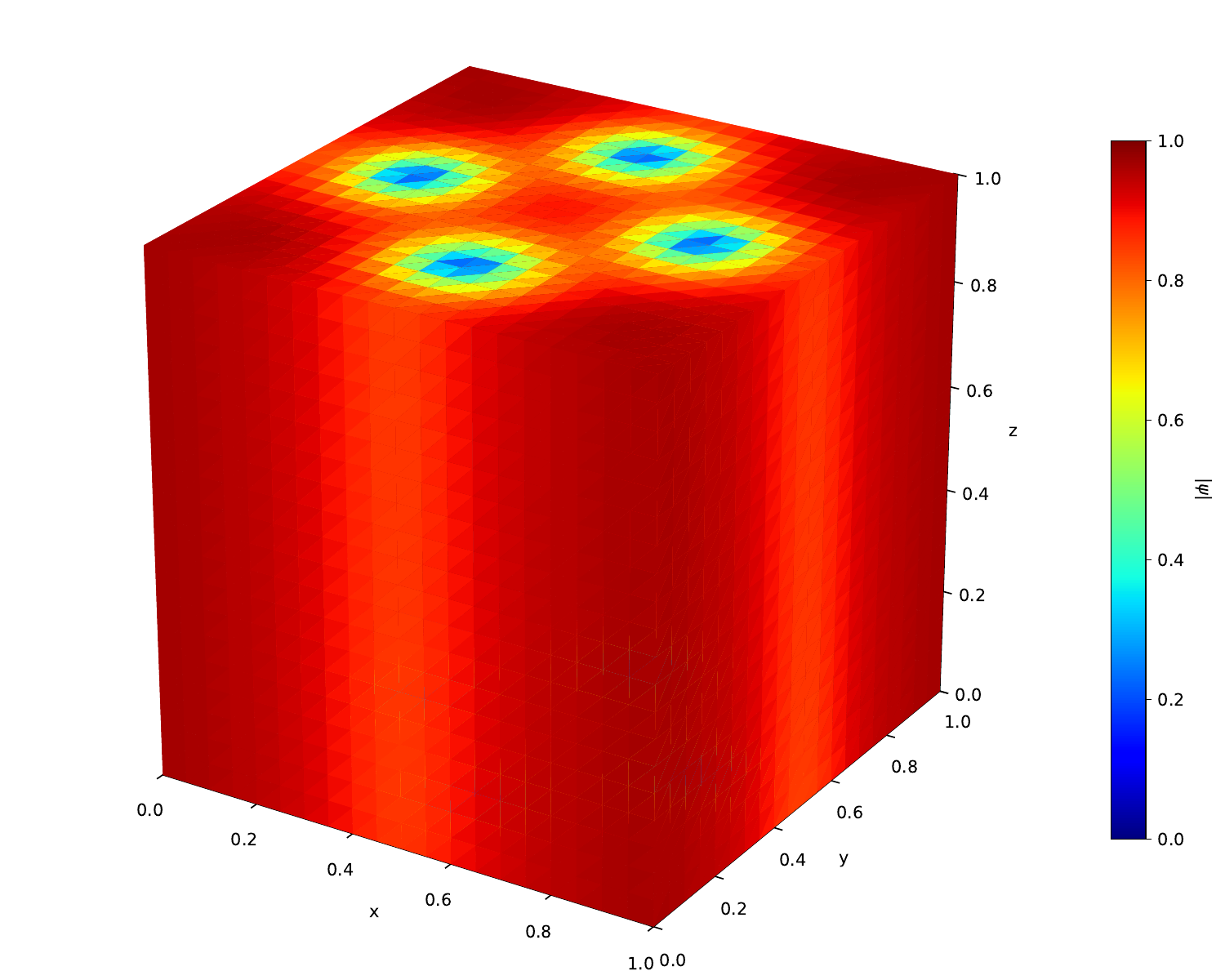}
  \end{minipage}\qquad
  \begin{minipage}[t]{0.35\linewidth}
    \centering
    \includegraphics[width=\linewidth]{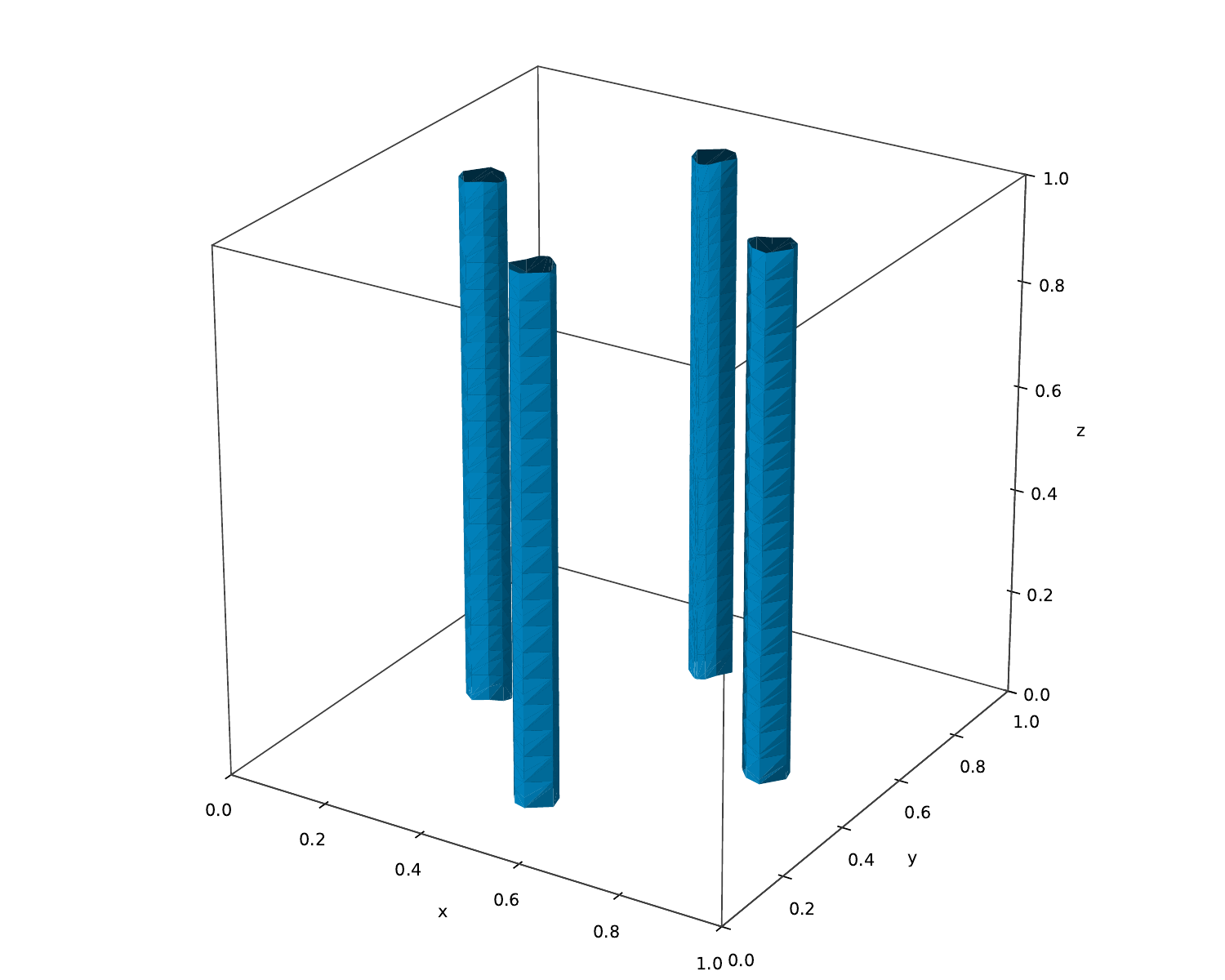}
  \end{minipage}
  \caption{Three-dimensional vortex structure at $t=100$. The left plot shows
    the boundary distribution of $|\psi_h|$, while the right plot shows the
    $|\psi_h|=0.3$ isosurface with four vortex lines aligned with the $z$-axis.}
  \label{fig:cube-vortices}
\end{figure}
\end{example}

\subsection{Vortex dynamics on larger domains with re-entrant corners}
\label{subsec:long-time-evolution}

In this section, we study the vortex dynamics on larger planar domains with
re-entrant corners, where an increased number of vortices is present. Two
different geometries are considered: a large L-shaped domain and a circular disk
with a triangular boundary defect. To assess the influence of the
gauge-invariant discretization, each computation is carried out with both the
proposed scheme and a conventional temporal-gauge discretization that lacks the
gauge invariance. The conventional discretization reads as follows. Starting
from $(\psi_h^n,\bA_h^n)\in V_h\times X_h$, we determine $(\psi_h^{n+1},\bA_h^{n+1})$
successively from
\begin{align*}
  &\eta\Big(\frac{\psi_h^{n+1}-\psi_h^n}{\tau},\varphi_h\Big)
    +\left(\frac{\ri}{\kappa}\nabla\psi_h^{n+1}+\bA_h^n\psi_h^{n+1},
    \frac{\ri}{\kappa}\nabla\varphi_h+\bA_h^n\varphi_h\right)
    +\bigl((|\psi_h^n|^2-1)\psi_h^n,\varphi_h\bigr)=0 \, ,
    \nn \\
  & \Big(\frac{\bA_h^{n+1}-\bA_h^n}{\tau},\bv_h\Big)
    +\bigl(\Curl\bA_h^{n+1},\Curl\bv_h\bigr)
    +\left(\Reop\!\left[(\psi_h^n)^{*}
    \left(\frac{\ri}{\kappa}\nabla\psi_h^n+\bA_h^n\psi_h^n\right)\right],\bv_h\right)
    =\bigl(H,\Curl\bv_h\bigr)
\end{align*}
for all test functions $\varphi_h\in V_h$ and $\bv_h\in X_h$. The same mesh and time step
are used in both computations.

\begin{example}[A large L-shaped domain]
  \label{ex:lshape10}
  In the fourth example, we simulate the vortex dynamics in a large L-shaped
  domain
  \[
    \Omega=(-10,10)^2\setminus\bigl([0,10]\times[-10,0]\bigr),
  \]
  whose re-entrant corner again lies at the origin. Here we use a symmetric mesh
  with $h=0.125$, which consists of $22731$ vertices, $44820$ triangles and
  $67550$ edges. The initial conditions are taken to be the perfectly
  superconducting state with $\psi_h^0=0.8+0.6\,\ri$ and $\bA_h^0=\boldsymbol0$,
  while the remaining physical parameters are $\kappa=2$, $\eta=1$, $H=0.8$, $\tau=0.1$ and
  $T=2000$.

  The energy decreases monotonically and the $L^\infty$-norm satisfies
  $\|\psi_h^n\|_{L^\infty(\Omega)}\le1$ at every time level, in agreement with Theorem~\ref{thm:discrete-mbp}
  and~\ref{thm:energy-stability}. For comparison, under the same mesh and time step, the conventional
  method produces a nonphysical giant vortex at the re-entrant corner, and this
  vortex grows progressively over time. The proposed gauge-invariant FEM does
  not exhibit this artifact. Figures~\ref{fig:lshape10-psi} and~\ref{fig:lshape10-std-psi} show the evolution of
  \(|\psi_h^n|\) for the two discretizations.

  We further examine the conventional scheme in two additional cases: (i) with
  the higher-order $(\mathcal P_2,\mathcal{ND}_2)$ pair, and (ii) under both
  mesh and time-step refinement to $h=0.0625$ and $\tau=0.05$. The physical
  parameters and initial conditions are unchanged. Figures~\ref{fig:lshape10-std-p2nd2} and~\ref{fig:lshape10-std-refined} show that
  although these modifications delay the formation of the nonphysical giant
  vortex at the re-entrant corner, it still appears. We also note that adding a
  term $-\epsilon \nabla(\mathrm{div} \bA)$ to equation (\ref{eq:tdgl-zero-phi:A}) and then applying a mixed
  finite element method does not eliminate this artifact, see \cite{Tain2024a}.

\begin{figure}[tbp]
  \centering
  \includegraphics[width=0.98\linewidth]{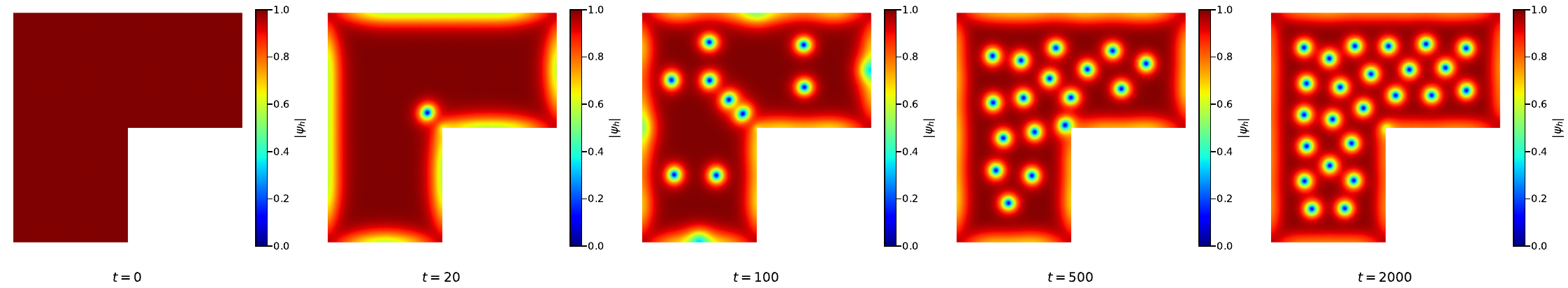}
  \caption{Evolution of $|\psi_h|$ on the large L-shaped domain for the
    gauge-invariant discretization with $\kappa=2$, $H=0.8$, and $\tau=0.1$, at
    $t=0,20,100,500,2000$.}
  \label{fig:lshape10-psi}
\end{figure}

\begin{figure}[tbp]
  \centering
  \includegraphics[width=0.98\linewidth]{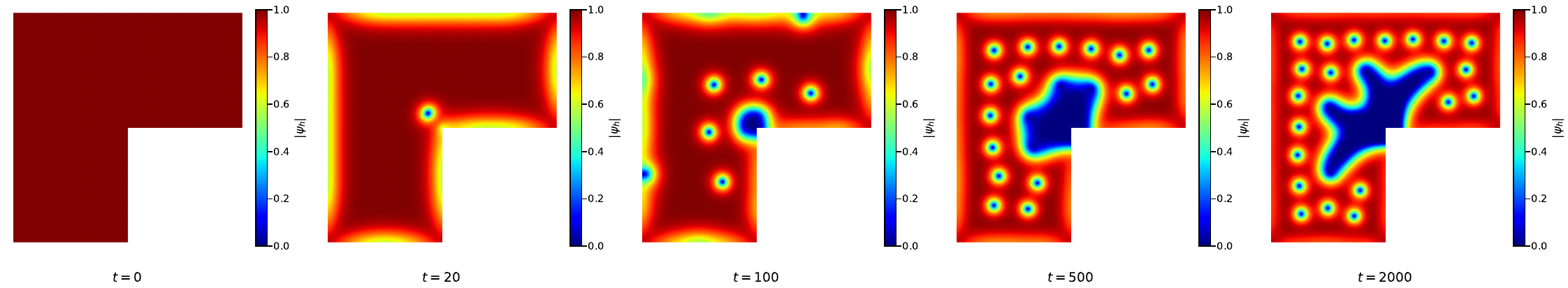}
  \caption{Evolution of $|\psi_h|$ on the large L-shaped domain for the conventional
    temporal-gauge discretization with $\kappa=2$, $H=0.8$, and $\tau=0.1$, at
    $t=0,20,100,500,2000$.}
  \label{fig:lshape10-std-psi}
\end{figure}

\begin{figure}[tbp]
  \centering
  \includegraphics[width=0.98\linewidth]{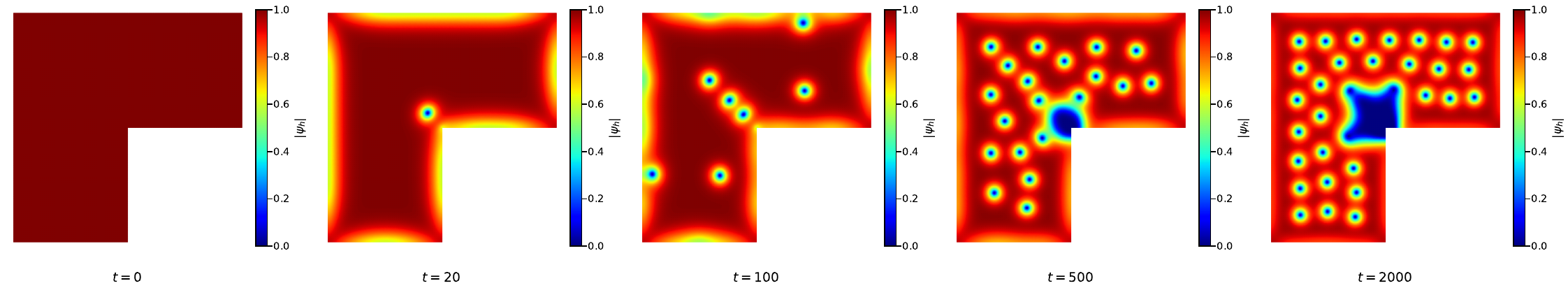}
  \caption{Evolution of $|\psi_h|$ on the large L-shaped domain for the conventional
    temporal-gauge discretization with $\kappa=2$ and $H=0.8$, using the higher-order
    $(\mathcal P_2,\mathcal{ND}_2)$ pair with $h=0.125$ and $\tau=0.1$, at
    $t=0,20,100,500,2000$.}
  \label{fig:lshape10-std-p2nd2}
\end{figure}

\begin{figure}[tbp]
  \centering
  \includegraphics[width=0.98\linewidth]{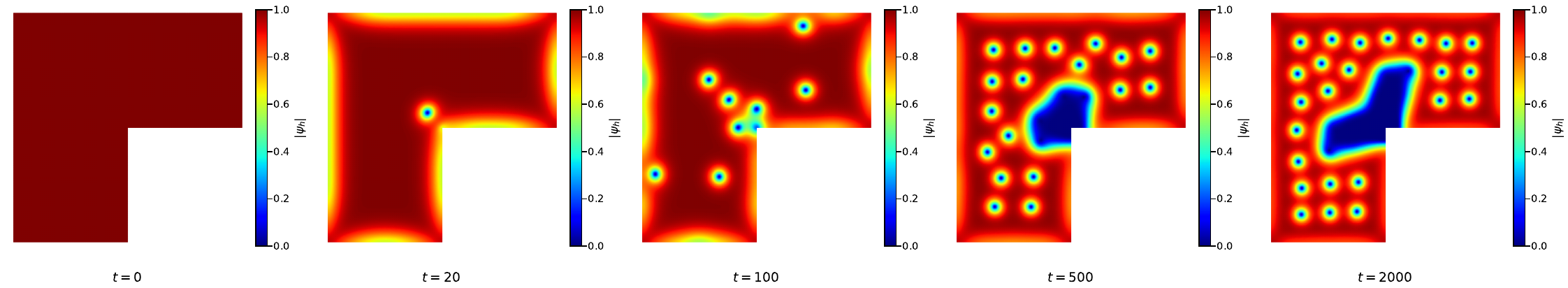}
  \caption{Evolution of $|\psi_h|$ on the large L-shaped domain for the conventional
    temporal-gauge discretization with $\kappa=2$ and $H=0.8$, under both mesh and
    time-step refinement to $h=0.0625$ and $\tau=0.05$, at $t=0,20,100,500,2000$.}
  \label{fig:lshape10-std-refined}
\end{figure}
\end{example}

\begin{example}[A disk with a re-entrant corner]
  \label{ex:circle}
  In the last example, we simulate the vortex dynamics in a circular disk with a
  triangular defect on the boundary, i.e.,
  \[
    \Omega=\bigl\{x^2+y^2<25\bigr\}\setminus\bigl\{|x+5|+|y|<1\bigr\},
  \]
  whose boundary has a re-entrant corner at $(-4,0)$.
  Alstr{\o}m et al.~\cite{alstrom} first computed this example using COMSOL with
  conventional Lagrange FEMs and observed spurious giant vortices numerically.
  Here we use a symmetric
  mesh, which has $23415$ vertices, $46312$ triangles and $69726$ edges. The
  physical parameters are set to $\kappa=4$, $\eta=1$, and $H=0.8$. The initial
  conditions are taken to be the perfectly superconducting state, with
  $\psi_h^0=0.8+0.6\,\ri$ and $\bA_h^0=\boldsymbol{0}$. The time step is $\tau=0.1$,
  and the computation is advanced to $T=4000$, i.e., a total of $40000$ time
  steps. The evolution of $|\psi_h|$ computed by the two methods with the same mesh
  and time step is shown in Figures~\ref{fig:circle-psi} and~\ref{fig:circle-std-psi}. In the present computation, the
  numerical solutions generated by both schemes exhibit energy decay and satisfy
  $\|\psi_h^n\|_{L^\infty(\Omega)}\le 1$ at all time levels. By contrast, the conventional
  method with N\'ed\'elec approximations
  produces a spurious large vortex near the re-entrant corner that grows
  progressively in time, whereas the proposed scheme produces no such artifacts.
  \begin{figure}[htbp]
    \centering
    \includegraphics[width=0.98\linewidth]{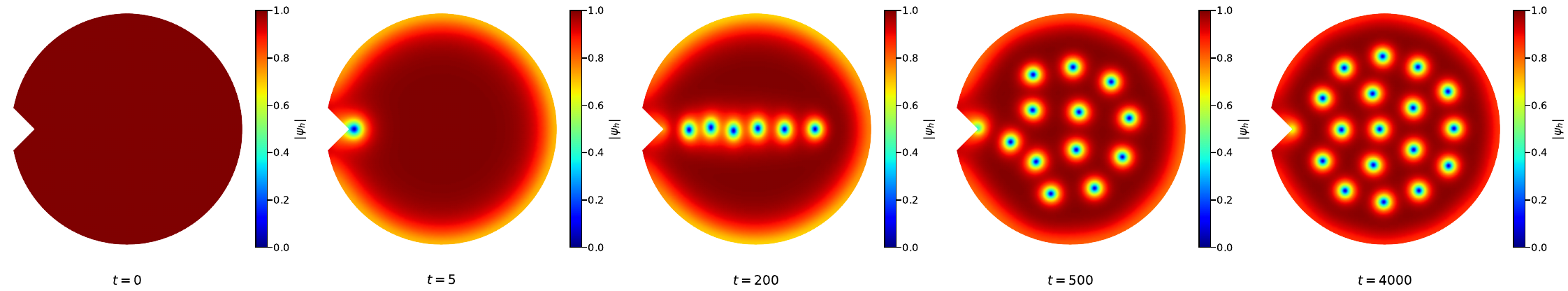}
    \caption{Evolution of $|\psi_h|$ on the circular domain with a re-entrant
      corner for the gauge-invariant discretization with $\kappa=4$, $H=0.8$, and
      $\tau=0.1$, at $t=0,5,200,500,4000$.}
    \label{fig:circle-psi}
  \end{figure}
  \begin{figure}[htpb]
    \centering
    \includegraphics[width=0.98\linewidth]{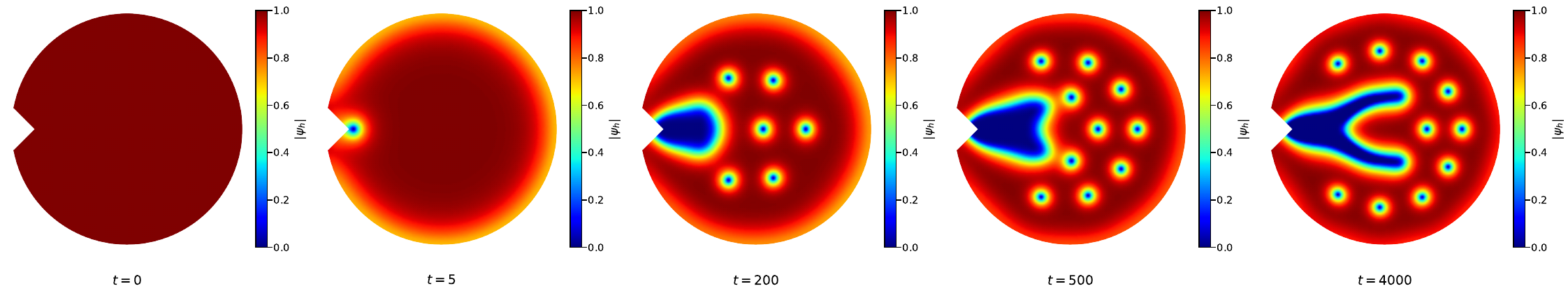}
    \caption{Evolution of $|\psi_h|$ on the circular domain with a re-entrant
      corner for the conventional temporal-gauge discretization with $\kappa=4$,
      $H=0.8$, and $\tau=0.1$, at $t=0,5,200,500,4000$.}
    \label{fig:circle-std-psi}
  \end{figure}
\end{example}

\section{Conclusion}
\label{sec:conclusion}

In this paper, we have developed a lowest-order gauge-invariant finite element
method for the time-dependent Ginzburg--Landau equations. The proposed scheme is linear
and decoupled. It preserves a discrete maximum bound principle for the order
parameter, $|\psi_h^n|\le 1$, and satisfies a discrete energy law under mild
restrictions on the time step and the mesh. The numerical results demonstrate
the effectiveness of the proposed gauge-invariant scheme for long-time vortex
simulations on domains with re-entrant corners. At the tested spatial and
temporal resolutions, the conventional temporal-gauge discretization develops a
nonphysical giant vortex near the corners, whereas the proposed scheme does not
exhibit this artifact. The experiments also support the discrete maximum bound
principle and energy dissipation law and show the expected convergence rates for
a smooth manufactured solution. Several questions remain open for future work,
including the extension of the method to higher-order finite elements,
optimal-order error estimates for solutions with corner singularities, and
adaptive mesh refinement strategies.

\section*{Declarations}

\noindent\textbf{Conflict of interest.}
The authors declare that they have no conflict of interest.

\medskip
\noindent\textbf{Availability of data and material.}
The meshes and the MFEM-based solver used to generate the numerical results of
Section~\ref{sec:numerical-experiments} are provided as supplementary material accompanying this manuscript.

\medskip
\noindent\textbf{Use of AI Tools.}
During the preparation of this manuscript, the authors used OpenAI Codex to assist with programming and with writing plotting code used to generate figures, and to polish portions of the text. The authors assume responsibility for all content.


\end{document}